\def\bH{{\mathbb{H}}}
\def\ve{\epsilon} 
\DeclareMathOperator{\diam}{diam}
\def\dim{\mathop\mathrm{dim}} 					
\def\dist{\mathop\mathrm{dist}} 						
\newcommand{\ps}[1]{\left( #1 \right)}
\newcommand{\ck}[1]{\left\{#1 \right\}}
\newcommand{\av}[1]{\left| #1 \right|}
\newcommand{\floor}[1]{\left\lfloor #1 \right\rfloor}
\newcommand{\fr}[2]{\frac{#1}{#2}}
\def\XXint#1#2#3{{\setbox0=\hbox{$#1{#2#3}{\int}$ }
\vcenter{\hbox{$#2#3$ }}\kern-.58\wd0}}
\def\grad{\nabla}
\newtheorem{theorem}{Theorem}[section]
\newtheorem{lemma}[theorem]{Lemma}
\theoremstyle{definition}
\theoremstyle{remark}
\newtheorem{remark}[theorem]{Remark}
\numberwithin{equation}{section}
\newcommand{\R}{\mathbb{R}}
\newcommand{\Z}{\mathbb{Z}}
\newcommand{\chara}{\mathbbm{1}}
\newcommand\blfootnote[1]{%
  \begingroup
  \renewcommand\thefootnote{}\footnote{#1}%
  \addtocounter{footnote}{-1}%
  \endgroup
}
\newcommand{\dD}{\mathcal{D}}
\newcommand{\HD}{{\mathsf{HD}}}
\newcommand{\LD}{{\mathsf{LD}}}
\numberwithin{equation}{section}
\theoremstyle{plain}
\newtheorem{corollary}[theorem]{Corollary}
\newtheorem{claim}[theorem]{Claim}
\title[Necessary condition]{Necessary condition for the $L^2$ boundedness of the Riesz transform on Heisenberg groups}
\author[D. D\k{a}browski]{Damian D\k{a}browski}
\address{Damian D\k{a}browski\newline\indent Departament de Matem\`atiques, Universitat Aut\`onoma de Barcelona; Barcelona Graduate School of Mathematics (BGSMath)\newline\indent Edifici C Facultat de Ci\`encies, 08193 Bellaterra, Barcelona, Catalonia, Spain.}
\email{ddabrowski ``at'' mat.uab.cat}
\author[M. Villa]{Michele Villa}
\address{Michele Villa \newline\indent School of Mathematics, University of Edinburgh, JCMB, Kings Buildings,
Mayfield Road, Edinburgh,
EH9 3JZ, Scotland.}
\email{m.villa-2 ``at'' sms.ed.ac.uk}
\curraddr{}
\email{}
\thanks{}
\date{}
\dedicatory{}
\begin{document}

\maketitle
\begin{center}

\begin{minipage}[c][][r]{300pt}
\begin{small}
\textsc{Abstract.} Let $\mu$ be a Radon measure on the $n$-th Heisenberg group $\bH^n$. In this note we prove that if the $(2n+1)$-dimensional (Heisenberg) Riesz transform on $\bH^n$ is $L^2(\mu)$-bounded, and if $\mu(F)=0$ for all Borel sets with $\dim_H(F)\leq 2$, then $\mu$ must have $(2n+1)$-polynomial growth. This is the Heisenberg counterpart of a result of Guy David from \cite{david-wavelets}.
\end{small}
\end{minipage}
\end{center}
\blfootnote{\textup{2010} \textit{Mathematics Subject Classification}: \textup{28A75}, \textup{28A12} \textup{28A78}.

\textit{Key words and phrases.} Rectifiability, Calder\'{o}n-Zygmund theory, singular integrals, Heisenberg group. 
}
\tableofcontents

\section{Introduction}
The motivation behind this note is the following question: what are the measures $\mu$ on the Heisenberg group $\bH^n$ which guarantee that the (correct notion of) Riesz transform is bounded from $L^2(\mu)$ to itself? This question (or some variant of it) with $\R^n$ instead of $\bH^n$, was one of the major starting point of the theory that came to be known as \textit{quantitative rectifiability}. This area of geometric measure theory has seen an impressive development in the past thirty years, starting with the landmark works of Peter Jones \cite{jones} and David and Semmes \cite{david-semmes91}, \cite{david-semmes93}, through the solution of fundamental questions in complex analysis, such as the Painlev\'{e} problem (see \cite{mmv}, \cite{david98}, \cite{tolsa03}), to more recent applications to harmonic analysis, see for example \cite{ntv} and \cite{ahmmt}.

In the last years, there has been an increasing interest in developing such a quantitative theory in different contexts than that of Euclidean spaces; examples of these are parabolic spaces and Heisenberg groups, or, more generally, Carnot groups. The former appear in the study of caloric measure. The latter arise naturally in the study of certain hypoelliptic operators, in the sense that the natural translations and dilations for these operators are those characterising the spaces; the Heisenberg group is the most important prototypical example, and the related operator is the so-called Kohn Laplacian; see \cite{blu} for a comprehensive study of stratified Lie groups and the corresponding operators. 

We should mention that the study of Heisenberg geometry can be approached from different perspectives and with different applications in mind; for example, see \cite{ny} for a connection with theoretical computer science. 

To be a little more specific: the starting motivation to develop a theory of quantitative rectifiability connected to our initial question, is to understand basic issues such as the removable sets for harmonic function (with respect to the relevant sub-Laplacian), or to give a characterisation of those domains where the Dirichlet problem (again, for the relevant sub-Laplacian) is well-posed. We want to underline, however, that a theory of quantitative rectifiability in the Heisenberg setting has its own, purely geometric, intrinsic appeal.

In the last couple of years, there has been some progress towards an answer to our initial question; see for example \cite{cfo}, \cite{fo18} and \cite{o18}. In this note we give a necessary condition to be imposed on a Radon measure $\mu$ on $\bH^n$ for the Riesz transform to be $L^2(\mu)$ bounded. Here $R_\mu$ is the singular integral operator whose kernel is the horizontal gradient of the fundamental solution of the Heisenberg sub-Laplacian, as defined in \cite{chousionis2012singular}. See Section \ref{preliminaries} for precise definitions.  

\begin{theorem} \label{t:main}
Let $\mu$ be a Radon measure on $\bH^n$ such that $R_\mu$ is bounded on $L^2(\mu)$ with norm $C_1$, and such that $\mu(F) = 0$ whenever $\dim_H(F) \leq 2$. Then there exists a constant $C_2$ such that for all balls $B(x,r) \subset \bH^n$, we have
\begin{align}\label{e:main}
    \mu(B(x,r)) \leq C_2 r^{2n +1}.
\end{align}
Here $C_2$ depends only on $n$ and $C_1$, and the ball $B(x,r)$ is defined with respect to the Kor\'{a}nyi metric, see Section \ref{preliminaries}.
\end{theorem}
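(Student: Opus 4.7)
The plan is to proceed by contradiction: assume \eqref{e:main} fails for every constant $C_2$, so that there exist balls $B_k = B(x_k, r_k)$ with $\Theta_k := \mu(B_k)/r_k^{2n+1} \to \infty$. The goal is to manufacture from this a Borel set $E \subset \bH^n$ with $\mu(E) > 0$ and $\dim_H(E) \leq 2$, contradicting the dimension hypothesis on $\mu$.

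The first step is a weak-boundedness/localization lemma extracted from the $L^2(\mu)$-bound on $R_\mu$. Applied to indicators, it gives $\|R_\mu \mathbf{1}_{B_k}\|_{L^2(\mu)} \leq C_1 \mu(B_k)^{1/2}$, so Chebyshev's inequality produces a ``good'' subset $G_k \subset B_k$ with $\mu(G_k) \geq \mu(B_k)/2$ on which $|R_{\mu, r_k} \mathbf{1}_{B_k}|$ is bounded by an absolute constant. Combined with the $(2n+1)$-order pointwise decay of the Heisenberg Riesz kernel from \cite{chousionis2012singular} and its antisymmetry, this sharply constrains how $\mu$ may be distributed within $B_k$: mass cannot be concentrated on one ``side'' of $G_k$ without making $R_\mu$ large, and cannot be too spread out without contradicting $\Theta_k \to \infty$.

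The heart of the proof is an iterative descent. Starting from one very bad ball $B_0$, I would use the weak-boundedness estimate to locate inside it a smaller sub-ball $B_1 \subset B_0$ where the density is still much larger than $\Theta_0$, with the radii shrinking geometrically; iterating produces a nested sequence (and, via a covering argument, whole families at each scale). The limit set $E$ of these families carries positive $\mu$-mass, and a Frostman-type calculation keyed on the rate of density growth shows that the upper $2$-dimensional Hausdorff content of $E$ vanishes. In particular $\dim_H(E) \leq 2$, contradicting the hypothesis.

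The principal obstacle is the quantitative iteration: extracting from a single $L^2(\mu)$-boundedness hypothesis an effective \emph{dimensional} bound, rather than merely a rate of density growth. Following the spirit of David's argument in \cite{david-wavelets}, this requires a careful accounting at each scale, using the antisymmetry and homogeneity of the horizontal gradient of the fundamental solution of the Kohn sub-Laplacian to balance contributions to $R_\mu \mathbf{1}_{B_k}$ from different annuli. The Heisenberg-specific complication is the anisotropy of the Korányi balls and the non-Euclidean nature of the kernel: standard Besicovitch-type covering theorems are unavailable, so one must rely on Vitali-type coverings (valid since $\bH^n$ with the Korányi metric is doubling), and all geometric estimates must respect the homogeneous structure of the group rather than Euclidean translations and dilations.
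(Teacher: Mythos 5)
Your broad outline matches the paper's: argue by contradiction, extract an iteration scheme from the $L^2$-bound, and produce a positive-$\mu$-measure set of Hausdorff dimension at most $2$. But there is a genuine gap at the heart of the sketch: you never explain \emph{why the threshold dimension is $2$}, and that is the content of the theorem. Your ``antisymmetry plus decay plus Chebyshev'' step mirrors David's Euclidean argument, which produces an atom (dimension $0$). What makes the Heisenberg case different is that the Riesz kernel $K = \nabla_{\bH} G$ \emph{vanishes identically on the $t$-axis} (see \eqref{e:K-norm}), a set whose Hausdorff dimension in the Kor\'anyi metric is exactly $2$. Consequently, two high-density regions can peacefully coexist without creating a large contribution to $R_\mu$, provided both lie close to the vertical axis. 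Your phrase ``mass cannot be concentrated on one side of $G_k$'' is precisely the assertion that fails here, and the proposal has no mechanism to replace it.

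Concretely, the paper's main lemma tests $R_{\mu,\delta}(\mathbf{1}_{Q_1})$ pointwise on a hypothetical high-mass cube $Q_2$ \emph{outside} a thin tube $T$ around the $t$-axis through $Q_1$: away from the axis the kernel has size at least $\approx 2^{-N}\ell(Q_0)^{-(D-1)}$, and Cauchy--Schwarz then contradicts the $L^2$-bound. This forces almost all the mass of $Q_0$ into $T_{2\Lambda}$, which contains only $\approx 2^{2N}$ cubes of scale $2^{-N}\ell(Q_0)$ (versus $2^{ND}$ total, $D=2n+2$). That cube count produces the packing estimate $\sum_{Q\in\HD(Q_0)}\ell(Q)^2\lesssim\ell(Q_0)^2$, and it is this inequality --- with exponent $2$ --- that closes the Hausdorff-content computation in the iteration. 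Your sketch has no analogue of the tube-concentration step or the packing bound, so there is no way for the ``Frostman-type calculation'' to know it should land on the number $2$. (Your remark about Besicovitch versus Vitali coverings is a red herring: the paper sidesteps covering lemmas entirely by using the Christ--David-type dyadic cubes available from geometric doubling.) To repair the proposal, you would need to (i) observe that $|K(z,t)|$ is small near the $t$-axis and bounded below away from it; (ii) use that to show high-density mass must concentrate in a tube of width $2^{-N}\ell(Q_0)$; (iii) count cubes in the tube to obtain a packing estimate with exponent $2$; and (iv) feed the packing estimate into the iteration to get $\dim_H \le 2$.
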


A corresponding statement holds in the Euclidean setting, and is a result of David, \cite{david-wavelets}, Part III, Proposition 1.4. See  \cite{orponen-notes}, Proposition 6.9 for a more detailed proof. Let $\mathcal{R}^{d}_\mu$ denote the standard $d$-dimensional Riesz transform in $\R^n$.
\begin{theorem}
Assume that $\mu$ is a non-atomic Radon measure on $\R^n$ such that $\mathcal{R}^{d}_\mu$ is bounded on $L^2(\mu)$ with norm $C_1$. Then, for all Euclidean balls $B_{\R^n}(x,r)\subset\R^n$ we have
\begin{align}\label{e:growth}
\mu (B_{\R^n}(x,r)) \leq C_2 r^d    
\end{align}
Here $C_2$ depends only on $C_1$, $n$, and $d$. 
\end{theorem}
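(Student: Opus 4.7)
I would exploit the antisymmetry of the $d$-Riesz kernel to convert the $L^2(\mu)$-boundedness of $\mathcal{R}^d_\mu$ into a lower bound on a Riesz-type energy of $\mu|_{B_0}$ (where $B_0 = B_{\R^n}(x_0, r_0)$ is the fixed ball), and then use non-atomicity to translate that bound into polynomial growth. The key observation is that although the bilinear form $\langle \mathcal{R}^d_\mu f, g\rangle_{L^2(\mu)}$ is antisymmetric in $(f,g)$, restricting to characteristic functions of complementary half-ball pieces produces a quantity of definite sign, which unlocks a useful one-sided estimate.

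Concretely, for a unit vector $v \in S^{n-1}$ and $t \in \R$, I would set $A_t = B_0 \cap \{x : (x-x_0)\cdot v < t\}$ and $B_t = B_0 \cap \{x : (x-x_0)\cdot v > t\}$. For $x \in A_t$ and $y \in B_t$ one has $(y-x)\cdot v > 0$, so the $v$-component of the truncated kernel $K_\epsilon(y-x) = \chara_{|y-x|>\epsilon}\,(y-x)/|y-x|^{d+1}$ is non-negative. Combining this sign with Cauchy--Schwarz and the $L^2(\mu)$-bound gives
\[
0 \;\leq\; \int_{B_t} \mathcal{R}^d_{\mu,\epsilon}(\chara_{A_t})(y)\cdot v\; d\mu(y) \;\leq\; C_1 \sqrt{\mu(A_t)\mu(B_t)} \;\leq\; \tfrac{C_1}{2}\mu(B_0).
\]
Integrating this chain over $t \in [-r_0, r_0]$ controls the integrated bilinear quantity above by $C_1 r_0 \mu(B_0)$; on the other hand, Fubini together with the swap symmetry $(x,y)\leftrightarrow(y,x)$ identifies that same integrated left-hand side with
\[
\tfrac{1}{2}\int\!\!\int_{B_0\times B_0,\; |y-x|>\epsilon} \frac{((y-x)\cdot v)^2}{|y-x|^{d+1}}\; d\mu(x)\,d\mu(y).
\]

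Sending $\epsilon\to 0$ by monotone convergence and then averaging over $v \in S^{n-1}$ using the identity $\int_{S^{n-1}} ((y-x)\cdot v)^2\, d\sigma(v) = c_n |y-x|^2$ would yield the Riesz-type energy bound
\[
\int\!\!\int_{B_0\times B_0,\; x\neq y} \frac{d\mu(x)\,d\mu(y)}{|y-x|^{d-1}} \;\leq\; 2n\,C_1\, r_0\, \mu(B_0).
\]
Non-atomicity of $\mu$ ensures $(\mu\otimes\mu)(\Delta) = 0$ on the diagonal $\Delta$, so for $d \geq 1$ the left-hand side is at least $\mu(B_0)^2/(2r_0)^{d-1}$. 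Rearranging yields $\mu(B_0) \leq 2^d n\, C_1\, r_0^d$, which is the claim with $C_2 = 2^d n\, C_1$. The main obstacle is not any individual step but the careful bookkeeping: tracking the truncation parameter $\epsilon$ throughout, correctly identifying the sign of $(y-x)\cdot v$ on the cross-product $A_t \times B_t$, and justifying the interchanges of integrals (all integrands are non-negative, so this reduces to Tonelli/monotone convergence). Non-atomicity only enters at the final lower bound, which is precisely where one expects it: a point mass trivially violates the theorem at small scales.
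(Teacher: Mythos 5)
Your proof is correct, but it takes a genuinely different route from the one the paper relies on. The paper does not itself prove the Euclidean statement; it cites David's book and Orponen's notes, and its own Heisenberg proof (which those Euclidean references parallel) is an iterative dyadic argument: assume polynomial growth fails, locate a cube of enormous density $\Theta(Q_0)\gg 1$, use $L^2(\mu)$-boundedness to show that cube's mass concentrates on a sparse family $\HD(Q_0)$ of much smaller subcubes of even higher density (Lemma \ref{l:main lemma}), and iterate to produce a Borel set of positive $\mu$-measure and low Hausdorff dimension, contradicting the hypothesis on $\mu$. Your proof is instead a one-shot energy estimate: you test the truncated operator against characteristic functions of the complementary slabs $A_t,B_t$ to exploit the sign-definiteness of $(y-x)\cdot v$ on $A_t\times B_t$, integrate in the threshold $t$, apply Tonelli and the swap symmetry $(x,y)\leftrightarrow(y,x)$ to produce the weighted energy $\iint ((y-x)\cdot v)^2 |y-x|^{-(d+1)}\,d\mu\,d\mu$, average over $v\in S^{n-1}$, and finally use $|y-x|\le 2r_0$ together with $(\mu\otimes\mu)(\Delta)=0$ (non-atomicity) to lower-bound the energy by $\mu(B_0)^2/(2r_0)^{d-1}$. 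All the interchanges are legitimate since every integrand in sight is non-negative, and the chain closes to give the explicit constant $C_2=2^d n C_1$.

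The two approaches buy different things. Yours is shorter, avoids dyadic machinery, and gives an explicit constant. The iteration scheme is heavier, but it is robust to degenerate kernels: it uses only a pointwise lower bound on $|K|$ away from a thin set, never any sign-definiteness on cross-products of half-spaces. That robustness is exactly what the paper needs, since the Heisenberg Riesz kernel vanishes identically on the vertical axis $\{z=0\}$ (see \eqref{e:K-norm}); there is no linear functional in which the kernel has a definite sign between two half-spaces, so your half-ball testing argument does not transfer to $\bH^n$. Correspondingly, the Heisenberg hypothesis must be strengthened from non-atomicity to ``$\mu(F)=0$ whenever $\dim_H(F)\le 2$'' to exclude measures on the axis, and the paper's iteration produces a set of dimension $\le 2$ rather than a point mass.

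One small caveat on your argument: the final lower bound $\iint_{B_0\times B_0,\,x\ne y} |y-x|^{1-d}\,d\mu\,d\mu \ge \mu(B_0)^2 (2r_0)^{1-d}$ requires $d\ge 1$, since for $d<1$ the power $|y-x|^{1-d}$ is increasing and the inequality reverses. This is the standard range for the $d$-dimensional Riesz transform in this literature, so nothing is really lost, but it should be stated, as the theorem's phrasing does not make the restriction explicit.
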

A measure satisfying \eqref{e:growth} (or \eqref{e:main}) is said to have \textit{polynomial growth}. 
Let us give a couple of remarks. 
\begin{remark}
Although the result itself (both in the Euclidean and Heisenberg case) is neither hard nor deep, it is nevertheless very useful. For example, most tools developed in the last two decades that take quantitative rectifiability beyond Ahlfors regular measures still need polynomial growth\footnote{With some exceptions, see for example \cite{azzam-schul18}, or \cite{badger-schul15}.} (see for example the book by Tolsa \cite{tolsa-book}). Thus, we expect that our result will be quite useful, too. 
\end{remark}
\begin{remark}
While the two results above look similar, there is actually a difference, in the sense that, in the Heisenberg case, there actually exist lower dimensional measures which give a bounded Riesz transform, but are not atomic.

This is \textit{not} a byproduct of the proof, but rather a fact of the Heisenberg geometry. Indeed, the $2$-dimensional $t$-axis (or any Heisenberg translate of it) gives a bounded $(2n+1)$-dimensional Riesz transform; this is simply because on these sets the kernel vanishes identically, see \eqref{e:K-norm}.

One can construct a more interesting example in the vertical plane of the one dimensional Heisenberg group $\bH$, say. Consider a tube of height $1$ and radius $\varepsilon_1^2$ around the $t$-axis, and take the intersection with the vertical plane. Call the resulting rectangle $R_{1,1}$. Cut out from $R_1$ two smaller rectangles $R_{2,1}$ and $R_{2,2}$, one in the top right corner and one in the bottom left corner, both of height $\varepsilon_2$ and width $\varepsilon_2^2$, for some $\varepsilon_2\le \varepsilon_1/4$. We proceed in this way, so that after $k$ steps we have $2^{k-1}$ disjoint rectangles $\{R_{k,i}\}_i$ of height $\varepsilon_k$ and width $\varepsilon_k^2$. Consider the natural probability measure $\mu$ on the Cantor-like set $C=\bigcap_k\bigcup_i R_{k,i}$. It is not difficult to show that, if $\varepsilon_k\to 0$ are small enough, the Heisenberg Riesz transform is bounded on $L^2(\mu)$; the idea is that the set is concentrated along the $t$-axis, and thus the kernel is very small (see \eqref{e:K-norm} below). Depending on the choice of $(\varepsilon_k)$ we have $\dim_H(C)\in [0,2].$
\end{remark}

\subsection*{Organisation of the article} In Section \ref{preliminaries} we briefly recall basic facts about Heisenberg groups and the Riesz transform. We also introduce a family of ``dyadic cubes'' suitable to our setting. 

Section \ref{s:main lemma} is dedicated to Lemma \ref{l:main lemma}, our main technical lemma. Roughly speaking, we show that if a measure $\mu$ is such that $R_{\mu}$ is bounded on $L^2(\mu)$, and there is some cube $Q_0$ with a very high concentration of $\mu$ (i.e. $\mu(Q_0)\gg \ell(Q_0)^{2n+1}$), then we can find a family $\HD(Q_0)$ of much smaller cubes, contained in $Q_0$, such that
\begin{enumerate}
    \item[a)] a very large portion of measure  $\mu$ on $Q_0$ is concentrated on the cubes from $\HD(Q_0)$,
    \item[b)] the family $\HD(Q_0)$ is relatively small, in the sense that it consists of few cubes.
\end{enumerate}

In Section \ref{s:iteration} we show that if the polynomial growth condition \eqref{e:main} is not satisfied, then we can find a cube satisfying the assumptions of our main lemma. This in turn allows us to start an iteration algorithm, consisting of using the main lemma countably many times, that results in constructing a set $Z$  with $\mu(Z)>0$ and $\dim_H(Z)\le 2.$ This finishes the proof of Theorem \ref{t:main}.

\subsection*{Acknowledgements}
We thank Katrin F\"{a}ssler and Tuomas Orponen for introducing us to the Heisenberg group and for suggesting the problem. 

The bulk of this work was done while the two authors were attending the Simons Semester in Geometric Analysis at IMPAN in the Autumn 2019. We thank Tomasz Adamowicz and the staff at the institute for their hospitality.

\section{Preliminaries}\label{preliminaries}
In our estimates we will often use the notation $f\lesssim g$ which means that there exists some absolute constant $C$ for which $f\le Cg.$ If the constant $C$ depends on some parameter $t$, we will write $f\lesssim_t g.$ Notation $f\approx g$ will stand for $f\lesssim g\lesssim f,$ and $f\approx_t g$ is defined analogously. For simplicity, in our estimates we will suppress the dependence on dimension $n$ and on absolute constants $\lambda,\ \Lambda$ (see \eqref{e:balls in cubes}).

\subsection{Heisenberg group}
In this paper we consider the $n$-th Heisenberg group with exponential coordinates (see \cite{cdpt} or \cite{faessler-notes} for a swift introduction to the Heisenberg group in a context close to ours). In practice, we will denote a point $p \in \bH^n$ as $(z,t) \in \R^{2n} \times \R$, and $z=(x_1,...,x_n,y_1,...,y_n)$. In these coordinates the group law in $\bH^n$ takes the form 
\begin{align*}
    p \cdot q = \ps{z + z', t+t' + \frac{1}{2} \sum_{i=1}^n (x_i y_i' + y_i x_i')},
\end{align*}
where $p=(z,t)$ and $q=(z',t')$. The identity element is the origin $(0,0)$ and the inverse is given by $p^{-1}=(-z,-t)$. We make $\bH^n$ into a metric space by setting $ d(p,q) := \|q^{-1}\cdot p\|_\bH$, where 
\begin{align}\label{e:korany}
    \|p\|_\bH^4 := |z|^4 + 16t^2,
\end{align}
and $|z|$ denotes the Euclidean norm of $z\in\R^{2n}$.

Note that $\|\cdot\|_\bH$ is $1$-homogeneous with respect to the anisotropic dilation $p \mapsto \lambda p = (\lambda z, \lambda^2 t)$, $\lambda >0$. The metric $d$ is sometimes called the Kor\'{a}nyi metric.

Given $p\in\bH^n$ and $r>0$ we set
\begin{equation*}
    B(p,r)=\ck{q\ |\ d(p,q)\le r},\quad U(p,r) = \ck{q\ |\ d(p,q)< r}.
\end{equation*}
For $\alpha>0$ we will write $\mathcal{H}^{\alpha}$ to denote the usual $\alpha$-dimensional Hausdorff measure with respect to metric $d$. For $A\subset\bH^n$ we set $\dim_H(A)$ to be the Hausdorff dimension of $A$. 

It follows easily from the definition of the Kor\'{a}nyi metric that for all $p\in\bH^n$ and $r>0$ we have
\begin{equation}\label{e:measure of balls}
    \mathcal{H}^{2n+2}(B(p,r)) = \mathcal{H}^{2n+2}(B(0,1))\, r^{2n+2}.
\end{equation}
Thus, even though the topological dimension of $\bH^n$ is $2n+1$, the Hausdorff dimension of $\bH^n$ is equal to $2n+2$. For the sake of brevity we set $D:=2n+2.$ Usually one denotes the Hausdorff dimension of $\bH^n$ by $Q$, but we have decided to save that letter for cubes; hence the non-standard notation.

It is also easy to check that if $\mathcal{L}^{2n+1}$ denotes the usual Lebesgue measure on $\R^{2n+1}\simeq \bH^n,$ then we have a constant $C>0$ such that
\begin{equation}\label{e:lebesgue and hausdorff}
    \mathcal{L}^{2n+1}=C\mathcal{H}^{D}.
\end{equation}

\subsection{Heisenberg Riesz transform}
Recall that, for a function $u : \bH^n \to \R$, the horizontal gradient of $u$ is given by
\begin{align*}
    \grad_\bH u := \ps{ X_1 u,...,X_n u, Y_1 u, ..., Y_n u},
\end{align*}
where the vector fields $X_1,\dots,X_n,Y_1,\dots, Y_n$ and $\frac{\partial}{\partial t}$ represent the left invariant translates of the canonical basis at the identity. In particular, $X_1,\dots,X_n,Y_1,\dots, Y_n$ span the horizontal distribution in $\bH^n$. 

The Heisenberg sublaplacian $\Delta_\bH$ is given by $\sum_{i=1}^n X_i^2 + Y_i^2$, and its fundamental solution is 
\begin{align*}
    G(p) := c_n \|p\|_\bH^{2-D}.
\end{align*}

The $(D-1)$-dimensional Riesz kernel in $\bH^n$, first considered in \cite{chousionis2012singular}, is given by
    $K(p)=\grad_{\bH} G(p).$
The Riesz transform is formally defined as
\begin{align*}
    R_\mu f (p) = \int_{\mathbb{H}^n} K(q^{-1}\cdot p)f(q) \, d\mu(q).
\end{align*}
Since it is not clear whether the integral above converges, one considers the truncated Riesz transform given by the formula
\begin{align*}
    R_{\mu, \delta}f(p)= \int_{\bH^n \setminus B(p,\delta)} K(q^{-1}\cdot p)f(q)\ d\mu(q),
\end{align*}
for $\delta>0$. We say that $R_{\mu}$ is bounded on $L^2(\mu)$ if the truncated operators $R_{\mu, \delta}$ are bounded on $L^2(\mu)$ uniformly in $\delta>0.$

One can easily check that the Riesz kernel is actually equal to
\begin{align*}
    & K(z, t) \\
    & = n\, \ps{ \frac{ -2x_1 |z|^2 + 8y_1 t }{\|(z,t)\|_\bH^{2n+4
    }}, \,\cdots ,    \frac{ -2x_n |z|^2 + 8y_n t }{\|(z,t)\|_\bH^{2n+4
    }}, \, \frac{ -2y_1 |z|^2 - 8x_1 t }{\|(z,t)\|_\bH^{2n+4}}, \cdots , \frac{ -2y_n |z|^2 - 8x_n t }{\|(z,t)\|_\bH^{2n+4}}}.
\end{align*}
Hence,
\begin{align} \label{e:K-norm}
    |K(z,t)|^2 = n^2 \, \frac{ 4 |z|^2 }{(|z|^4 + 16t^2)^{n+1}}.
\end{align}
This implies the curious fact that 
    $|K(z,t)| \leq C$
whenever
\begin{align} \label{e:paraboloid}
    |z| \leq  \, 16 |t|^{n+1},
\end{align}
which is a `paraboloidal' double cone around $t$-axis with vertex at the origin. This fact will play a key role in the subsequent analysis. 

Chousionis and Mattila showed in \cite[Proposition 3.11]{chousionis2012singular} that the Riesz kernel is a standard kernel. In particular, it satisfies the following continuity property: whenever  $q_1, q_2\neq p\in \bH^n$, we have
\begin{equation*}\label{e:kernel continuous}
    |K(p^{-1}\cdot q_1) - K(p^{-1}\cdot q_2)| \lesssim \max\bigg\{\frac{d(q_1,q_2)}{d(p,q_1)^{D}},\frac{d(q_1,q_2)}{d(p,q_2)^{D}} \bigg\}.
\end{equation*}
Taking $p=0$ and $q_1 = \tilde{q_1}^{-1}\cdot \tilde{p},\ q_2 = \tilde{q_2}^{-1}\cdot \tilde{p},$ one gets immediately that for all  $\tilde{q_1}, \tilde{q_2}\neq \tilde{p}\in \bH^n$
\begin{equation}\label{e:kernel continuous 2}
    |K(\tilde{q_1}^{-1}\cdot \tilde{p}) - K(\tilde{q_2}^{-1}\cdot \tilde{p})| \lesssim \max\bigg\{\frac{d(\tilde{q_1},\tilde{q_2})}{d(\tilde{p},\tilde{q_1})^{D}},\frac{d(\tilde{q_1},\tilde{q_2})}{d(\tilde{p},\tilde{q_2})^{D}} \bigg\}.
\end{equation}

\subsection{Dyadic cubes}
We are going to use a family of decompositions of $\bH^n$ into subsets that share many properties with the standard dyadic cubes from $\R^n$. The most classical constructions of this kind are due to Chirst \cite{christ1990b} and David \cite{David1988}, but for us it will be more convenient to use the ``cubes'' constructed in \cite{kaenmaki2012}.

First, note that given any ball $B(p,2r)$, one may use the $5r$-covering lemma and the property \eqref{e:measure of balls} to conclude that there exists some absolute constant $m$ such that $B(p,2r)$ may be covered by $m$ balls $B(p_i,r)$, where $\ck{p_i}_{i=1}^m$ are points in $B(p,2r)$. That is, $\bH^n$ is geometrically doubling. In particular, we can use {\cite[Theorem 2.1, Remark 2.2]{kaenmaki2012}}.

\begin{lemma}[\cite{{kaenmaki2012}}] \label{l:cubes}
	For all $k\in\Z$ there exists a family of subsets of $\bH^n$, denoted by $\dD_k$, such that
	\begin{enumerate}
		\item $\bH^n = \bigcup_{Q\in\dD_k}Q$,
		\item if $k\ge l$, and $Q\in \dD_k,\ P\in\dD_l$, then either $Q\cap P=\varnothing$ or $Q\subset P$,
		\item for every $Q\in \dD_k$ there exists $p_Q\in Q$ such that
		\begin{equation}\label{e:balls in cubes}
		U(p_Q,\lambda2^{-k})\subset Q\subset B(p_Q,\Lambda2^{-k})
		\end{equation}
		for some absolute constants $\lambda,\Lambda>0$.
	\end{enumerate}
\end{lemma}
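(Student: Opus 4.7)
The plan is to mimic the Christ/David construction of dyadic cubes, adapted to the geometrically doubling metric space $\bH^n$, following the approach of \cite{kaenmaki2012}. All that is used is the geometric doubling property, which (as the authors note just above the lemma) follows from \eqref{e:measure of balls} together with the $5r$-covering lemma, and which gives a uniform bound on the cardinality of any maximally $2^{-k}$-separated subset of a ball $B(p, 2^{-k+1})$.

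The first step is to build a \emph{nested} family of nets. For each $k\in\Z$, I would select, via Zorn's lemma, a maximal $2^{-k}$-separated subset $\ck{p_i^k}_{i\in I_k}\subset\bH^n$, arranged so that $\ck{p_i^l}_{i\in I_l}\subset \ck{p_i^k}_{i\in I_k}$ whenever $l\le k$. In practice, fix the scale $k=0$, build the coarser nets by iteratively thinning, and build the finer nets by iterative greedy refinement. The second step is to define a parent map $\pi$: to each $p_i^k$ assign the unique $p_{\pi(i)}^{k-1}$ minimising $d(p_i^k, \cdot)$ over the coarser net, using a fixed well-ordering of $\bigsqcup_k I_k$ to break ties, and set $\pi(i)=i$ if $p_i^k$ already belongs to the coarser net. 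Iterating $\pi$ produces a rooted forest on $\bigsqcup_k I_k$, whose roots are the coarsest-scale net points.

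Now define the cube $Q_i^k$ to be the set of $p\in\bH^n$ for which, at every scale $m\ge k$, the closest net point to $p$ (with the same tie-breaking rule) is a descendant of $p_i^k$ in the forest. Properties (1) and (2) are then immediate from the construction, since the forest structure makes the association ``$p \mapsto$ (its ancestor at scale $k$)'' compatible across scales. For property (3), one telescopes: if $p\in Q_i^k$ is the limit of its associated chain $(p_{j(m)}^m)_{m\ge k}$ of ancestors, then the maximality of the coarser net gives $d(p_{j(m)}^m, p_{j(m-1)}^{m-1})\lesssim 2^{-m}$ for each $m>k$, and summing the geometric series yields $d(p, p_i^k)\le \Lambda\, 2^{-k}$ for an absolute $\Lambda$. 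Conversely, if $q\in U(p_i^k, \lambda 2^{-k})$ with $\lambda$ small relative to the telescoping constant, then the $2^{-k}$-separation of the net at scale $k$ forces $p_i^k$ to be the unique closest ancestor of $q$ at \emph{every} finer scale, so $q\in Q_i^k$.

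The main obstacle I expect is bookkeeping: ensuring that property (2) holds literally (no ``stray'' points caught between two cubes at incompatible scales) and that the cubes form a genuine partition rather than merely an almost-partition. This is resolved by the globally consistent tie-breaking rule mentioned above, together with taking each $Q_i^k$ to be a half-open analogue so that boundaries are assigned deterministically. Geometric doubling enters only to guarantee that at each tie the set of candidate centers is uniformly bounded, which is what keeps $\lambda$ and $\Lambda$ absolute constants independent of $k$ and $i$.
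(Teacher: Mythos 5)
Note first that the paper does not actually prove this lemma: it only records that $\bH^n$ is geometrically doubling (via the $5r$-covering lemma and \eqref{e:measure of balls}) and then cites \cite{kaenmaki2012} directly for the dyadic decomposition. Your attempt to carry out a Christ-type construction therefore goes beyond what the paper offers, which is fine in principle, but as written it contains a genuine gap.

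The gap is in your definition of the cubes. You set $Q_i^k$ to be the set of $p$ such that for \emph{every} $m\ge k$ the nearest net point $\sigma_m(p)$ at scale $m$ is a descendant of $p_i^k$ in the parent forest, and you assert that covering (property (1)) is immediate. It is not: the parent of $\sigma_{m+1}(p)$ is the scale-$m$ net point nearest to $\sigma_{m+1}(p)$, while $\sigma_m(p)$ is the scale-$m$ net point nearest to $p$, and these two nearest-point selections need not agree. Already on $\R$ with the nested nets $\{j2^{-m}\}_{j\in\Z}$ and ties broken towards smaller indices: if $p=2^{-m-1}+\ve$ with $\ve>0$ tiny, then $\sigma_m(p)=2^{-m}$, whereas $\sigma_{m+1}(p)=2^{-m-1}$ has tie-broken parent $0\ne 2^{-m}$; so $(\sigma_m(p))_{m\ge k}$ is not a descent chain and $p$ belongs to no $Q_i^k$. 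The global well-ordering and the half-open convention repair equidistance ties, but not this mismatch between two different nearest-point selections, which is exactly the subtlety the known constructions have to work around. In Christ \cite{christ1990b}, in \cite{kaenmaki2012}, and in Hyt\"onen--Kairema, the cube associated to $p_i^k$ is defined intrinsically from the parent forest (for instance as the closure of the set of all descendant net points, or by iteratively intersecting a scale-$k$ Voronoi cell with the already-constructed parent cube), and covering and nestedness are then \emph{proved}, not read off the definition. Once the cubes are built that way, your telescoping argument for \eqref{e:balls in cubes} does go through, with the containment $U(p_Q,\lambda 2^{-k})\subset Q$ following from the $2^{-k}$-separation of the scale-$k$ net as you indicate.
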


Let us stress once more that we will not keep track of how various parameters appearing in the proof depend on $\lambda$ and $\Lambda$.

We set $\dD = \bigcup_k \dD_k$. For $Q\in\dD_k$ we define the sidelength of $Q$ as $\ell(Q)=2^{-k}$. Clearly, by \eqref{e:measure of balls} and \eqref{e:balls in cubes}, for $Q\in\dD$ we have
\begin{equation*}
    \mathcal{H}^{{D}}(Q) \approx \ell(Q)^{{D}}.
\end{equation*}
It follows that if $Q\in\dD$, then for $k\ge 0$
\begin{equation}\label{e:dyadic property}
\#\ck{P\in\dD\ |\ P\subset Q,\ \ell(P)=2^{-k}\ell(Q) }\approx 2^{kD}.
\end{equation}

Given a Radon measure $\mu$ and $Q\in\dD$ we will denote the $(D-1)$-dimensional density of $\mu$ in $Q$ by
\begin{equation*}
    \Theta_{\mu}(Q) = \frac{\mu(Q)}{\ell(Q)^{D-1}}.
\end{equation*}
For simplicity, we will suppress the dependence on $\mu$ and simply write $\Theta(Q)$.

\section{Main lemma}\label{s:main lemma}
   Our main tool in the proof of Theorem \ref{t:main} is the following lemma.
	\begin{lemma}\label{l:main lemma}
		Let $\mu$ be a Radon measure on $\bH^n$ such that $R_{\mu}$ is bounded on $L^2(\mu)$ with norm $C_1$. There exist constants $A=A(n)>1,\ s=s(A,n)\in (0,1/2)$ and $M=M(C_1,n)>100$ such that the following holds.
		
		Suppose that $Q_0\in\dD$ satisfies $\Theta(Q_0)\ge M$. Set $N = \floor{A^{-2} \log(\Theta(Q_0))}$. Then, the family of high density cubes
		\begin{equation*}
		\HD(Q_0) = \ck{Q\in \dD\ |\ Q\subset Q_0,\ \ell(Q)=2^{-N}\ell(Q_0),\ \Theta(Q)>2\,\Theta(Q_0)}
		\end{equation*}
		satisfies
		\begin{equation}\label{e:HD are thicc}
		\sum_{Q\in\HD(Q_0)}\mu(Q) \ge (1-\Theta(Q_0)^{-s})\mu(Q_0).
		\end{equation}
		Moreover, we have
		\begin{equation}\label{e:HD packing}
		\sum_{Q\in\HD(Q_0)}\ell(Q)^2\le C_p\,  \ell(Q_0)^2
		\end{equation}
		for some dimensional constant $C_p$ (``$p$'' stands for ``packing'').
	\end{lemma}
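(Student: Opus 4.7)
The plan is to prove both conclusions by contradiction via $L^2$-testing of $R_\mu$, exploiting the explicit form of the Heisenberg kernel. By \eqref{e:K-norm}, $|K(z,t)|\approx |z|/\|(z,t)\|_{\bH}^D$: of standard Calder\'on--Zygmund size outside the paraboloidal cone $|z|\le 16|t|^{n+1}$, but vanishing along the $t$-axis. Both quantitative statements will rest on pushing an $L^2$-bound for $R_\mu$ against a lower bound for $R_\mu$ applied to an indicator, which is the place where the Heisenberg kernel geometry enters.

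For \eqref{e:HD are thicc}, set $E=Q_0\setminus\bigcup_{Q\in\HD(Q_0)}Q$, so that $E$ is covered by cubes of sidelength $2^{-N}\ell(Q_0)$ with $\mu(Q)\le 2\Theta(Q_0)\ell(Q)^{D-1}$. Assuming $\mu(E)>\Theta(Q_0)^{-s}\mu(Q_0)$ for contradiction, the $L^2$ bound gives
$$
\int |R_{\mu,\delta}1_{Q_0}|^2\,d\mu\le C_1^2\,\mu(Q_0).
$$
The crux is then to produce a large subset $E'\subset E$ with $\mu(E')\gtrsim \mu(E)$ on which $|R_{\mu,\delta}1_{Q_0}(p)|\gtrsim \Theta(Q_0)^\alpha$ for some $\alpha$ close to $1$, so that combining the two bounds one obtains $\Theta(Q_0)^{2\alpha}\cdot\Theta(Q_0)^{-s}\mu(Q_0)\lesssim C_1^2\mu(Q_0)$, which fails once $M$ is large enough, provided $s<2\alpha$.

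For \eqref{e:HD packing}, the naive upper bound $\sum_{\HD}\mu(Q)\le\mu(Q_0)$ yields only $\sum\ell(Q)^{D-1}\le\tfrac12\ell(Q_0)^{D-1}$, which is much weaker than required as soon as $D-1>2$. I would close the gap by a Rademacher-signs $L^2$-test: for $f=\sum_{Q\in\HD(Q_0)}\sigma_Q1_Q$ with independent random signs, $\bE\|R_\mu f\|_{L^2(\mu)}^2=\sum_Q\|R_\mu 1_Q\|_{L^2(\mu)}^2\le C_1^2\mu(Q_0)$. A matching lower bound $\|R_\mu 1_Q\|^2_{L^2(\mu)}\gtrsim \Theta(Q_0)^2\ell(Q)^{D-1}$, obtained by integrating $|K|^2\,d\mu$ on a neighbourhood of $Q$ lying outside the paraboloidal cones of its points, then yields $\#\HD(Q_0)\lesssim 2^{N(D-1)}/\Theta(Q_0)$. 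Choosing the parameter $A$ in $N=\lfloor A^{-2}\log \Theta(Q_0)\rfloor$ large enough (so that $(D-3)A^{-2}\log 2\le 1$) collapses this into $\#\HD(Q_0)\lesssim 2^{2N}$, which is exactly \eqref{e:HD packing}.

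The main obstacle in both parts is the kernel's vanishing along the $t$-axis, which destroys the naive Euclidean-style pointwise lower bound $|R_\mu 1_A(p)|\gtrsim \Theta(Q_0)$ and is, in fact, precisely what makes the Heisenberg case non-trivial (and allows the lower-dimensional examples in the remark). The key Heisenberg-specific input, supporting the lower bounds required in the previous two paragraphs, will be to show that for most of the relevant test points $p$ only a small fraction of the target mass lies inside the paraboloidal cone of $p$; I expect this to require a stopping-time/maximal decomposition of $\mu|_{Q_0}$ separating \emph{axial} mass (concentrated near some $t$-axis through $p$) from the bulk, together with a direct kernel computation on the complement of the cone. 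Making this dichotomy quantitative enough to defeat the losses $\Theta(Q_0)^{-s}$ and $\Theta(Q_0)^{-1}$ is the technical heart of the argument.
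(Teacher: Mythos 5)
The high-level idea — extract a lower bound on $R_\mu$ applied to an indicator, and play it against the $L^2$ bound — is on the right track, but as you yourself note, both of your key lower bounds ($|R_{\mu,\delta}\chara_{Q_0}(p)|\gtrsim\Theta_0^\alpha$ on a large piece of $E$, and $\|R_\mu\chara_Q\|_{L^2(\mu)}^2\gtrsim\Theta_0^2\ell(Q)^{D-1}$ for $Q\in\HD(Q_0)$) are precisely what the Heisenberg geometry obstructs, and you do not establish them. The danger is not merely technical: in the critical regime the measure could be entirely clustered along a single vertical line, and then for every $p$ in the support the bulk of the mass lies in the cone where $|K|$ degenerates, so $|R_\mu\chara_A(p)|$ is genuinely small for \emph{every} test set $A$. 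Your ``stopping-time decomposition separating axial from bulk mass'' is offered as a placeholder for the missing step, not as an argument, so as it stands the proposal has a real gap. The Rademacher device is fine as far as it goes, but it only transfers the burden back onto the same unproved pointwise lower bound.

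The paper closes this gap in a different and more economical way. Rather than trying to lower-bound $R_\mu\chara_Q$ uniformly over the high-density cubes $Q$, one first uses pigeonhole to fix a \emph{single} cube $Q_1\subset Q_0$ of the much finer sidelength $2^{-AN}\ell(Q_0)$ carrying at least $\approx 2^{-AND}\mu(Q_0)$ of the mass. One then tests $R_\mu$ on $\chara_{Q_1}$: for $p$ lying \emph{outside} the vertical tube $T$ of radius $2^{-N}\ell(Q_0)$ through $Q_1$ (and $Q_1$ centred at the origin WLOG), $|K(q^{-1}\cdot p)|\approx|K(p)|\gtrsim 2^{-N}\ell(Q_0)^{1-D}$ by the explicit formula \eqref{e:K-norm}, since $p$ has $|z_p|\gtrsim 2^{-N}\ell(Q_0)$. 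Feeding this into the $L^2$-bound forces $\mu(Q_0\setminus T_2)\le\Theta_0^{-1}\mu(Q_0)$ (Lemma~\ref{c:dense-tube} and its corollary). The real dichotomy is thus not resolved by a stopping-time construction but by locating the axis once and for all. After that both conclusions are pure counting in the tube: $T_{2\Lambda}$ contains only $\lesssim 2^{2N}$ cubes of sidelength $2^{-N}\ell(Q_0)$ (Lemma~\ref{c:number-cubes}), so the low-density cubes inside the tube carry at most $\lesssim 2^{(3-D)N}\mu(Q_0)\ll\Theta_0^{-s}\mu(Q_0)$, which gives \eqref{e:HD are thicc}; and every cube in $\HD(Q_0)$ must meet $T_2$ (otherwise applying the dense-tube lemma to its $2^{-AN}\ell(Q_0)$-subcubes would contradict $\Theta(Q)>2\Theta_0$), which yields $\#\HD(Q_0)\lesssim 2^{2N}$ and hence \eqref{e:HD packing}. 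This is a genuinely shorter route than what you propose, and it sidesteps entirely the need for a lower bound on $R_\mu\chara_Q$ for $Q\in\HD(Q_0)$.
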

	
	The rest of this section is dedicated to proving the lemma above. For brevity of notation, we set $\Theta_0=\Theta(Q_0)$. Observe that the integer $N$ was chosen in such a way that
	\begin{equation}\label{e:choice of N}
	2^{A^2N}\approx \Theta_0\ge M.
	\end{equation}
	In particular, we have $N\ge N_0$ for some very big $N_0$ depending on $M$ and $A$.
	%
	
	We split the proof of Lemma \ref{l:main lemma} into several steps.
	
	First, note that by the pigeonhole principle and \eqref{e:dyadic property}, we can find a cube $Q_1 \in \dD$ with sidelength $\ell(Q_1) = 2^{-AN} \ell(Q_0)$ such that 
	\begin{align}\label{e:assumption Q1}
	\mu(Q_1) \gtrsim \frac{\mu(Q_0)}{2^{AND}}.    
	\end{align}
	Without loss of generality, by applying the appropriate translation, we can assume that $Q_1$ is centred at the origin, i.e. $p_{Q_1}=0$.
	Set
	\begin{align*}
	T:=\ck{ (z,t) \in Q_0 \, |\, |z| \leq 2^{-N} \ell(Q_0) }
	\end{align*}
	and for any $\kappa>0$ set 
	\begin{equation*}
	{ T_{\kappa}}:=\ck{ (z,t) \in Q_0 \, |\, |z| \leq \kappa\, 2^{-N} \ell(Q_0) }.
	\end{equation*}
	Observe that $Q_1\subset T.$ In a sense, $T$ can be seen as a tube with vertical axis passing through $p_{Q_1}=0$. Note also that for any cube $Q\subset Q_0 \setminus T$ we have $\dist(Q,Q_1)\gtrsim 2^{-N}\ell(Q_0).$
	
	We start by proving a few preliminary results.
	\begin{lemma}\label{c:number-cubes}
	There are at most $C(\kappa)\, 2^{2N}$ cubes of sidelength $2^{-N}\ell(Q_0)$ contained in $T_\kappa$.
	\end{lemma}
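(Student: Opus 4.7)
The bound is essentially a Lebesgue volume count. The plan is to compare $\mathcal{H}^D$-measures: each cube of sidelength $2^{-N}\ell(Q_0)$ has $\mathcal{H}^D$-mass $\approx (2^{-N}\ell(Q_0))^D$, and by the dyadic property the cubes of a fixed generation contained in $T_\kappa$ are pairwise disjoint, so their number is at most $\mathcal{H}^D(T_\kappa)$ divided by the mass of one such cube.

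The main step is therefore to estimate $\mathcal{H}^D(T_\kappa)$ from above. By \eqref{e:lebesgue and hausdorff} it suffices to bound the Euclidean Lebesgue measure $\mathcal{L}^{2n+1}(T_\kappa)$. Writing $p=(z,t)\in \R^{2n}\times\R$, the set $T_\kappa\subset Q_0$ is contained in the product of a Euclidean ball in $\R^{2n}$ of radius $\kappa\,2^{-N}\ell(Q_0)$ in the $z$-variable with an interval in the $t$-variable whose length is controlled by the $t$-extent of $Q_0$. Since $Q_0\subset B(p_{Q_0},\Lambda\ell(Q_0))$ and the Kor\'anyi ball constraint $\|q\|_\bH^4=|z|^4+16t^2\le (\Lambda\ell(Q_0))^4$ forces $|t|\lesssim \ell(Q_0)^2$, this $t$-interval has length $\lesssim \ell(Q_0)^2$. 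Consequently
\begin{equation*}
\mathcal{H}^D(T_\kappa)\;\approx\;\mathcal{L}^{2n+1}(T_\kappa)\;\lesssim\;\bigl(\kappa\,2^{-N}\ell(Q_0)\bigr)^{2n}\cdot \ell(Q_0)^2.
\end{equation*}

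Now I invoke disjointness: if $Q_1,\dots,Q_K$ are the cubes of sidelength $2^{-N}\ell(Q_0)$ contained in $T_\kappa$, property (2) of Lemma~\ref{l:cubes} gives that they are pairwise disjoint, and each satisfies $\mathcal{H}^D(Q_i)\approx (2^{-N}\ell(Q_0))^D=(2^{-N}\ell(Q_0))^{2n+2}$ by \eqref{e:balls in cubes} and \eqref{e:measure of balls}. Hence
\begin{equation*}
K\cdot (2^{-N}\ell(Q_0))^{2n+2}\;\lesssim\;\sum_{i=1}^K \mathcal{H}^D(Q_i)\;\le\;\mathcal{H}^D(T_\kappa)\;\lesssim\;\kappa^{2n}(2^{-N}\ell(Q_0))^{2n}\ell(Q_0)^2,
\end{equation*}
which rearranges to $K\lesssim \kappa^{2n}\,2^{2N}$, giving the desired constant $C(\kappa)\approx \kappa^{2n}$.

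There is no genuine obstacle here; the only subtlety is remembering that the Hausdorff dimension $D=2n+2$ exceeds the topological dimension $2n+1$, which is precisely what allows the extra factor $2^{2N}$ (rather than $2^{(2n+1)N}$ that naive scaling in $\R^{2n+1}$ might suggest to be divided out): the tube $T_\kappa$ is ``thin'' in $2n$ of the horizontal directions but has full Kor\'anyi height, and the mismatch between $\mathcal{L}^{2n+1}$-volume growth and $\mathcal{H}^D$-volume of cubes produces the exponent $2$.
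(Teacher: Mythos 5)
Your proof takes essentially the same route as the paper: bound $\mathcal{H}^D(T_\kappa)$ by comparing to Lebesgue measure of a thin Euclidean slab of horizontal radius $\kappa 2^{-N}\ell(Q_0)$ and $t$-extent $\lesssim\ell(Q_0)^2$, then divide by the $\mathcal{H}^D$-mass $\approx(2^{-N}\ell(Q_0))^{D}$ of one generation-$N$ cube, with $D-(2n)=2$ producing the exponent $2^{2N}$. One step is stated loosely: $Q_0\subset B(p_{Q_0},\Lambda\ell(Q_0))$ does \emph{not} directly give $\|q\|_\bH\le\Lambda\ell(Q_0)$ for $q\in Q_0$, since that ball is centred at $p_{Q_0}$ rather than at the origin, so the Kor\'anyi constraint controls $\|p_{Q_0}^{-1}\cdot q\|_\bH$ and not $\|q\|_\bH$. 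The paper closes this cleanly by using that $0=p_{Q_1}\in Q_1\subset Q_0$, whence $Q_0\subset B(0,2\Lambda\ell(Q_0))$ by the triangle inequality, and then $16t^2\le(2\Lambda\ell(Q_0))^4$ gives $|t|\lesssim\ell(Q_0)^2$; with that one-line fix your argument is correct and matches the paper's.
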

	\begin{proof}
	Observe that since $0\in Q_0$, and by \eqref{e:balls in cubes} $Q_0\subset B(p_{Q_0},\Lambda\ell(Q_0))$, we have $Q_0\subset B(0,2\Lambda\ell(Q_0)).$ Hence,
	\begin{align*}
	    T_{\kappa} &\subset \ck{ (z,t)\in B(0,2\Lambda\ell(Q_0))\ |\ |z|\le \kappa\, 2^{-N} \ell(Q_0) }\\
	    &\subset \ck{ (z,t)\in \bH^n\ |\ |z|\le \kappa\, 2^{-N} \ell(Q_0),\ 16|t|^2\le (2\Lambda \ell(Q_0))^4 }=: \widetilde{T}_{\kappa}.
	\end{align*}
	By \eqref{e:lebesgue and hausdorff},
	\begin{equation*}
	    \mathcal{H}^{D}(\widetilde{T}_{\kappa}) = C\mathcal{L}^{2n+1}(\widetilde{T}_{\kappa}) \approx (\kappa 2^{-N} \ell(Q_0))^{2n}(2\Lambda \ell(Q_0))^2 \approx_{\kappa} 2^{-2nN}\ell(Q_0)^{D}.
	\end{equation*}
	It follows that $\mathcal{H}^{D}(T_{\kappa})\lesssim_{\kappa}2^{-2nN}\ell(Q_0)^{D}.$ On the other hand, recall that for any cube $Q$ with sidelength $\ell(Q)=2^{-N}\ell(Q_0)$ we have $\mathcal{H}^{D}(Q)\approx 2^{-ND}\ell(Q_0)^{D}$. Since all such cubes are pairwise disjoint, we get
	\begin{equation*}
	    \#\ck{Q\in\mathcal{D}\ |\ \ell(Q)=2^{-N}\ell(Q_0),\ Q\subset T_{\kappa} } \lesssim	\fr{\mathcal{H}^{D}(T_{\kappa})}{2^{-ND}\ell(Q_0)^{D}}\lesssim_{\kappa} \fr{2^{-2nN}\ell(Q_0)^{D}}{2^{-N(2n+2)}\ell(Q_0)^{D}}=2^{2N}.
	  \end{equation*}
	\end{proof}
	\begin{lemma}\label{c:dense-tube}
		Let $Q\in\dD$ satisfy $Q \subset Q_0 \setminus T$ and $\ell(Q) = \ell(Q_1)=2^{-AN}\ell(Q_0)$. Then
		\begin{equation*}
		    \mu(Q) \le \frac{\mu(Q_0)}{\Theta_0 \, 2^{AND}}.
		\end{equation*}
	\end{lemma}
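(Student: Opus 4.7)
The plan is to extract a pointwise lower bound for $|R_{\mu,\delta}(\chi_{Q_1})(p)|$ on $Q$ and combine it with the $L^2$-bound $\|R_{\mu,\delta}(\chi_{Q_1})\|_{L^2(\mu)}^2 \le C_1^2\mu(Q_1)$, using the mass estimate \eqref{e:assumption Q1} for $Q_1$. The geometric reason this can work is precisely the hypothesis $Q \subset Q_0\setminus T$: every $p \in Q$ has horizontal component $|z(p)| \ge 2^{-N}\ell(Q_0)$, which by \eqref{e:K-norm} keeps $|K(p)|$ bounded below. Indeed, since $Q_0 \subset B(0,2\Lambda\ell(Q_0))$ gives $\|p\|_\bH \le 2\Lambda\ell(Q_0)$, one reads off $|K(p)| \gtrsim 2^{-N}/\ell(Q_0)^{D-1}$.

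The key step is to transfer this lower bound to $|K(q^{-1}p)|$ uniformly over $q \in Q_1$, without losing to vector-valued cancellation. For this I would apply the kernel continuity \eqref{e:kernel continuous 2} with $\tilde q_1=0$, $\tilde q_2=q\in Q_1$, $\tilde p=p\in Q$: since $d(0,q) \lesssim 2^{-AN}\ell(Q_0)$ while $d(p,0), d(p,q) \gtrsim 2^{-N}\ell(Q_0)$ (using $Q\cap T=\varnothing$ and that $Q_1$ sits inside a $\Lambda 2^{-AN}\ell(Q_0)$-ball around $0$), this yields
\begin{equation*}
|K(q^{-1}p) - K(p)| \lesssim \frac{2^{-(A-D)N}}{\ell(Q_0)^{D-1}},
\end{equation*}
which is much smaller than the lower bound on $|K(p)|$ as soon as $A=A(n)$ is chosen large (say $A\ge 2D+2$). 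A direct triangle-inequality computation then gives
\begin{equation*}
\left|\int_{Q_1} K(q^{-1}p)\,d\mu(q)\right| \ge \tfrac{1}{2}|K(p)|\mu(Q_1) \gtrsim \frac{2^{-N}\mu(Q_1)}{\ell(Q_0)^{D-1}},
\end{equation*}
and since $\dist(Q,Q_1)\gtrsim 2^{-N}\ell(Q_0)>0$, for $\delta$ smaller than this distance the left-hand side equals $R_{\mu,\delta}(\chi_{Q_1})(p)$ exactly.

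Squaring, integrating over $Q$, and applying $L^2$-boundedness yields
\begin{equation*}
\mu(Q)\cdot\frac{2^{-2N}\mu(Q_1)^2}{\ell(Q_0)^{2(D-1)}} \lesssim C_1^2\,\mu(Q_1).
\end{equation*}
Solving for $\mu(Q)$ and substituting $\mu(Q_1)\gtrsim \mu(Q_0)/2^{AND}$, $\mu(Q_0)=\Theta_0\ell(Q_0)^{D-1}$, and $\Theta_0\approx 2^{A^2N}$ from \eqref{e:choice of N}, the target inequality $\mu(Q)\le \mu(Q_0)/(\Theta_0\, 2^{AND})$ reduces to the clean condition $C_1^2\cdot 2^{2N+2AND}\lesssim \Theta_0$, which holds as long as $A^2>2AD+2$ (satisfied by any $A=A(n)$ large enough) and $N$ exceeds a threshold absorbing $C_1$ and the implicit dimensional constants (arranged by taking $M=M(C_1,n)$ large in the hypothesis $\Theta_0\ge M$).

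The main obstacle I anticipate is the vector-valued nature of the kernel: a priori $|\int K\,d\mu|$ could be much smaller than $\int|K|\,d\mu$ due to directional cancellation across $Q_1$. The saving grace is the enormous scale separation between $Q_1$ (sidelength $2^{-AN}\ell(Q_0)$) and $\dist(Q,Q_1)\gtrsim 2^{-N}\ell(Q_0)$, which forces $K(q^{-1}p)$ to vary by much less than $|K(p)|$ as $q$ moves through $Q_1$. Everything else in the argument is just bookkeeping of exponents, which is why this lemma needs the two length scales $2^{-AN}\ell(Q_0)$ and $2^{-N}\ell(Q_0)$ to be separated in a very specific way via the parameter $A$.
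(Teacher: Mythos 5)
Your proof is correct and follows essentially the same strategy as the paper's: a pointwise lower bound $|K(p)|\gtrsim 2^{-N}/\ell(Q_0)^{D-1}$ for $p\in Q_0\setminus T$, the kernel continuity \eqref{e:kernel continuous 2} to control the variation over $Q_1$, and the $L^2(\mu)$-bound tested against $\chara_{Q_1}$ and integrated over the cube in question. The paper phrases this as a proof by contradiction (assuming a cube $Q_2$ with $\mu(Q_2)$ too large and deducing $C_1\gtrsim M^{1/4}$), whereas you solve directly for $\mu(Q)$, but the computation and the constraints on $A$, $N$, $M$ are the same in both versions.
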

	\begin{proof}
		Suppose the claim above is false. Then we can find a cube $Q_2 \subset Q_0 \setminus T$ with $\ell(Q_2)=2^{-AN}\ell(Q_0)$ such that
		\begin{equation}\label{e:assumption Q2}
		\mu(Q_2) \geq \frac{\mu(Q_0)}{\Theta_0 \,2^{AND}}.
		\end{equation}
		Let $0< \delta < \dist(Q_1, Q_2)$, let $p\in Q_2$ be arbitrary, and consider 
		\begin{equation*}
		R_{\mu,\delta}(\chara_{Q_1})(p) = \int_{Q_1} K(q^{-1} \cdot p) \, d\mu(q).   
		\end{equation*}
		By triangle inequality,
		\begin{equation}\label{e:split}
		|R_{\mu,\delta}(\chara_{Q_1})(p)| \geq \av{ \int_{Q_1} K(p) \, d\mu(q) } - \av{ \int_{Q_1} K(q^{-1}\cdot p) - K(p) \, d\mu(q) }.
		\end{equation}
		We estimate the first term as follows. Note that, since $p \in Q_2$ and $Q_2$ lies outside $T$, then, writing $p=(z, t)$ and using \eqref{e:K-norm}, we have
		\begin{align*}
		|K(p)|^2 \approx \frac{|z|^2}{(|z|^4 + 16t^2)^{n+1}} \gtrsim \frac{|z|^2}{\ell(Q_0)^{4(n+1)}} \geq 2^{-2N}\ell(Q_0)^{-4n-2} = 2^{-2N}\ell(Q_0)^{-2D+2}.
		\end{align*}
		And thus we also have 
		\begin{equation}\label{e:first term estimate}
		\av{ \int_{Q_1} K(p)\, d\mu(q) } = \av{K(p)}\mu(Q_1) \gtrsim 2^{-N} \, \frac{\mu(Q_1)}{\ell(Q_0)^{D-1}}.
		\end{equation}
		
		For the second term in \eqref{e:split} we use the continuity of the kernel $K$ \eqref{e:kernel continuous 2} and the fact that $d(p,q) \approx \|p\|_{\bH}\ge 2^{-N}\ell(Q_0)$ (because $p\in Q_2\subset Q_0\setminus T$):
		\begin{align}
		|K(q^{-1} \cdot p) - K(p)| \lesssim \fr{\|q\|_{\bH}}{\min(\|p\|_{\bH}, d(p,q))^{D}}\lesssim \fr{2^{-AN}\ell(Q_0)}{(2^{-N}\ell(Q_0))^{D}} = \fr{2^{-AN+DN}}{\ell(Q_0)^{D-1}}.
		\end{align}
		Taking $A\ge 2D$ we get
		\begin{equation*}
		\av{ \int_{Q_1} K(q^{-1} \cdot p) - K(p) \, d\mu(q) }\lesssim 2^{-AN/2}\fr{\mu(Q_1)}{\ell(Q_0)^{D-1}}.
		\end{equation*}
		Together with \eqref{e:first term estimate} and \eqref{e:split}, assuming $N_0$ bigger than some absolute constant (recall that $N\ge N_0$), this gives
		\begin{equation*}
		|R_{\mu,\delta}(\chara_{Q_1})(p)|\gtrsim 2^{-N} \, \frac{\mu(Q_1)}{\ell(Q_0)^{D-1}}
		\end{equation*}
		for all $p\in Q_2$.
		
		Now, we use the estimate above and the $L^2(\mu)$ boundedness of $R_{\mu}$ to get
		\begin{equation*}
		2^{-N}\frac{\mu(Q_1)}{\ell(Q_0)^{D-1}}\mu(Q_2)^{\frac{1}{2}} \lesssim \ps{\int |R_{\mu,\delta}(\chara_{Q_1})(p)|^2 \, d\mu(p)}^{\frac{1}{2}} \leq C_1 \mu(Q_1)^{\frac{1}{2}}.
		\end{equation*}
		Our assumptions on $Q_1$ \eqref{e:assumption Q1} and $Q_2$ \eqref{e:assumption Q2} yield
		\begin{align*}\label{e:est-cauchy}
		C_1 & \gtrsim 2^{-N}\frac{\mu(Q_1)^{\frac{1}{2}}\mu(Q_2)^{\frac{1}{2}}}{\ell(Q_0)^{D-1}}\gtrsim 2^{-N}\fr{\mu(Q_0)}{2^{AND}\ell(Q_0)^{D-1}}\Theta_0^{-1/2} = 2^{-AND-N}\Theta_0^{1/2} \notag\\
	& \overset{\eqref{e:choice of N}}{\approx} 2^{-AND-N}\, 2^{A^2 N/2}.
		\end{align*}
		Taking $A\ge 5D$ we can bound the last term from below in the following way:
		\begin{equation*}
		    2^{-AND-N+A^2 N/2}\ge  2^{A^2N/4}\overset{\eqref{e:choice of N}}{\gtrsim} M^{1/4}.
		\end{equation*}
		Putting together the estimates above gives $C_1\gtrsim M^{1/4}$, which is a contradiction for $M=M(C_1,n)$ big enough.
	\end{proof}
	
	We immediately get the following corollary.
	\begin{corollary}
	We have
	\begin{equation}\label{e:T2 thicc}
	\mu({T_2})\ge (1-\Theta_0^{-1})\mu(Q_0).
	\end{equation}
	\end{corollary}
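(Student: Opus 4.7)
The plan is to cover $Q_0\setminus T_2$ by dyadic cubes at the scale $\ell(Q_1)=2^{-AN}\ell(Q_0)$ of Lemma \ref{c:dense-tube}, observe that each such cube actually lies in $Q_0\setminus T$, and then combine Lemma \ref{c:dense-tube} with the volume counting from \eqref{e:dyadic property}.

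Concretely, let $\mathcal{F}=\{Q\in\dD\ |\ Q\subset Q_0,\ \ell(Q)=2^{-AN}\ell(Q_0)\}$, and let $\mathcal{F}'\subset\mathcal{F}$ consist of those $Q$ that intersect $Q_0\setminus T_2$. The key geometric point is that every $Q\in\mathcal{F}'$ satisfies $Q\subset Q_0\setminus T$. Indeed, if $p=(z,t)\in Q\cap(Q_0\setminus T_2)$ then $|z|>2\cdot 2^{-N}\ell(Q_0)$, and for any other $p'=(z',t')\in Q$ one has $|z-z'|\le d(p,p')\lesssim\ell(Q)=2^{-AN}\ell(Q_0)$ directly from the Kor\'anyi formula \eqref{e:korany}. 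Since $A\gg 1$ and $N\ge N_0$, this forces $|z'|>2^{-N}\ell(Q_0)$, so $p'\notin T$, i.e.\ $Q\cap T=\varnothing$.

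Given this, Lemma \ref{c:dense-tube} applies to each $Q\in\mathcal{F}'$, and by \eqref{e:dyadic property} the full family $\mathcal{F}$ has cardinality $\#\mathcal{F}\lesssim 2^{AND}$. Summing,
\begin{equation*}
    \mu(Q_0\setminus T_2)\;\le\;\sum_{Q\in\mathcal{F}'}\mu(Q)\;\le\;\#\mathcal{F}\cdot\frac{\mu(Q_0)}{\Theta_0\,2^{AND}}\;\lesssim\;\frac{\mu(Q_0)}{\Theta_0},
\end{equation*}
which, after absorbing the absolute constant into $M$ (or, equivalently, by slightly strengthening the estimate in Lemma \ref{c:dense-tube} to $\mu(Q)\le \Theta_0^{-2}\,2^{-AND}\mu(Q_0)$, which the contradiction argument there easily permits by picking $A$ marginally larger), gives $\mu(T_2)\ge (1-\Theta_0^{-1})\mu(Q_0)$.

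The only subtle point is verifying the buffer inclusion $\mathcal{F}'\subset\{Q:Q\subset Q_0\setminus T\}$, which requires that horizontal distance in the Heisenberg metric controls Euclidean distance between the $z$-components; this is immediate from \eqref{e:korany} and is the reason the tube was \emph{doubled} from $T$ to $T_2$ in the statement. Once this is in hand, the rest is a direct counting argument, so no serious obstacle is expected.
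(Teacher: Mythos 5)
Your argument is correct and is essentially the paper's proof: cubes at scale $2^{-AN}\ell(Q_0)$ not contained in $T_2$ are disjoint from $T$ (the buffer check you spell out via $|z-z'|\le d(p,p')$ is left implicit in the paper behind the phrase ``assuming $A$ large enough with respect to $\Lambda$''), and then Lemma \ref{c:dense-tube} combined with the cardinality bound from \eqref{e:dyadic property} gives the conclusion. Your remark about the implicit constant is well taken---the paper writes a bare $\le$ where \eqref{e:dyadic property} only gives $\lesssim$, and absorbing the constant into $M$ is indeed the natural repair, harmless for the downstream use in Lemma \ref{l:measureHD}.
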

	\begin{proof}
	Observe that if $Q\in\mathcal{D}$ satisfies $\ell(Q) = \ell(Q_1) = 2^{-AN}\ell(Q_0)$ and $Q\not\subset T_2$, then we have $Q\cap T=\varnothing$ (assuming $A$ large enough with respect to $\Lambda$). It follows that $Q$ satisfies the assumptions of Lemma \ref{c:dense-tube}, and so
	\begin{equation*}
	    \mu(Q)\le2^{-AND}\Theta_0^{-1}\mu(Q_0).
	\end{equation*}
	Summing over all such $Q$ and using \eqref{e:dyadic property} yields
	\begin{equation*}
	    \mu(Q_0\setminus T_2) \le \Theta_0^{-1}\mu(Q_0).
	\end{equation*}
	\end{proof}
	Recall that
	\begin{equation*}
	\HD(Q_0) = \ck{Q\in \dD\ |\ Q\subset Q_0,\ \ell(Q)=2^{-N}\ell(Q_0),\ \Theta(Q)>2\Theta_0  },
	\end{equation*}
	and that $\Lambda$ is the absolute constant such that $Q\subset B(p_Q,\Lambda\ell(Q)).$ Without loss of generality, we may assume $\Lambda>2$.
	
	We are ready to prove the first part of Lemma \ref{l:main lemma}, the estimate \eqref{e:HD are thicc}.
	\begin{lemma}\label{l:measureHD}
		There exists $s=s(A,n)\in (0,1/2)$ such that
		\begin{equation}\label{e:HD are thicc 2}
		\sum_{Q\in\HD(Q_0)}\mu(Q) \ge (1-\Theta_0^{-s})\mu(Q_0).
		\end{equation}
	\end{lemma}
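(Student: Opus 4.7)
The plan is to decompose $\mu(Q_0)$ into three pieces and estimate two of them: the mass outside the tube $T_2$, the mass inside $T_2$ carried by cubes $Q$ with $\ell(Q) = 2^{-N}\ell(Q_0)$ that are not in $\HD(Q_0)$ (``low-density'' cubes), and the mass in $\HD(Q_0)$ itself. The first piece is already controlled by \eqref{e:T2 thicc}, so the task reduces to bounding the low-density contribution.

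First I would argue that any dyadic cube $Q$ with $\ell(Q) = 2^{-N}\ell(Q_0)$ which meets $T_2$ actually lies inside $T_\kappa$ for some $\kappa = \kappa(\Lambda)$. The point is: if $p = (z,t) \in Q \cap T_2$ and $q=(z',t') \in Q$, then $q^{-1}\cdot p$ has horizontal part $z-z'$, hence $|z-z'| \leq \|q^{-1}\cdot p\|_{\bH} = d(p,q) \leq 2\Lambda \ell(Q)$. Since $|z| \leq 2\cdot 2^{-N}\ell(Q_0)$ (as $p \in T_2$), we conclude $|z'| \leq (2+2\Lambda)\,2^{-N}\ell(Q_0)$, so $Q \subset T_{2+2\Lambda}$. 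Applying Lemma \ref{c:number-cubes} with $\kappa = 2+2\Lambda$, there are at most $C\cdot 2^{2N}$ such cubes.

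For each such cube that is not in $\HD(Q_0)$, we have $\Theta(Q) \leq 2\Theta_0$, hence $\mu(Q) \leq 2\Theta_0(2^{-N}\ell(Q_0))^{D-1}$. Summing over at most $C\cdot 2^{2N}$ of them and using $\mu(Q_0) = \Theta_0\ell(Q_0)^{D-1}$ gives a total mass at most
\begin{equation*}
C\cdot 2^{2N}\cdot 2\Theta_0 (2^{-N}\ell(Q_0))^{D-1} \;=\; 2C\cdot 2^{-N(D-3)}\mu(Q_0).
\end{equation*}
By \eqref{e:choice of N} we have $2^{-N} \lesssim \Theta_0^{-1/A^2}$, and since $D-3 = 2n-1 \geq 1 > 0$, this contribution is at most $C'\Theta_0^{-(D-3)/A^2}\mu(Q_0)$. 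Combining this with the bound on $\mu(Q_0\setminus T_2)$ yields
\begin{equation*}
\mu(Q_0)-\sum_{Q\in\HD(Q_0)}\mu(Q) \;\leq\; \Theta_0^{-1}\mu(Q_0) + C'\Theta_0^{-(D-3)/A^2}\mu(Q_0).
\end{equation*}
Choosing $s = s(A,n) := (D-3)/(2A^2)$ (which lies in $(0,1/2)$ provided $A$ is chosen large enough relative to $n$), and using $\Theta_0 \geq M$ with $M = M(C_1,n)$ sufficiently large so that $C'\Theta_0^{-s} \leq 1 - \Theta_0^{-1}\cdot\Theta_0^{s}$ (i.e., to absorb the constant and the $\Theta_0^{-1}$ term into $\Theta_0^{-s}$), the inequality \eqref{e:HD are thicc 2} follows.

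The proof is essentially a counting argument with a density pigeonhole, so I do not expect a serious obstacle. The only subtle point is verifying that a cube of sidelength $2^{-N}\ell(Q_0)$ which touches $T_2$ must lie inside $T_\kappa$ for a slightly larger $\kappa$, which requires invoking the precise form of the Korányi metric rather than a general metric-space argument; everything else reduces to arithmetic with the exponents $D-3$ and $1/A^2$.
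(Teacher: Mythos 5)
Your proof is correct and is essentially the same argument as the paper's: both rely on the corollary bound $\mu(Q_0\setminus T_2)\le\Theta_0^{-1}\mu(Q_0)$, the cube count from Lemma~\ref{c:number-cubes} applied to a fattened tube, and the trivial density bound $\mu(Q)\le 2\Theta_0\ell(Q)^{D-1}$ for low-density cubes, followed by the same exponent arithmetic with $D-3$. The only cosmetic differences are that you argue directly while the paper phrases the low-density estimate as a contradiction via the family $\LD(Q_0)$, and that you spell out the containment $Q\subset T_{2+2\Lambda}$ (using the coordinate projection bound $|z-z'|\le\|q^{-1}\cdot p\|_{\bH}$) which the paper simply declares ``easy to see''; note also that $A$ is already fixed in Lemma~\ref{l:main lemma}, so $s=(D-3)/(2A^2)<1/2$ holds automatically rather than by a further choice of $A$.
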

	\begin{proof}
		We will prove \eqref{e:HD are thicc 2} by contradiction. Suppose that
		\begin{equation}\label{e:HD not thicc :(}
		\sum_{Q\in\HD(Q_0)}\mu(Q) < (1-\Theta_0^{-s})\mu(Q_0).
		\end{equation}
		Set
		\begin{equation*}
		\LD(Q_0) = \ck{Q\in\dD\ |\ Q\subset T_{2\Lambda},\ \ell(Q)=2^{-N}\ell(Q_0),\ \Theta(Q)\le 2\Theta_0  }.
		\end{equation*}
		It is easy to see that the cubes from $\HD(Q_0)\cup \LD(Q_0)$ cover $T_{2}$. If we assume $\Theta_0\ge M>100$, and $s<1/2$, then $\Theta_0^{-s}/2\ge \Theta_0^{-1}$, and so by \eqref{e:T2 thicc} and \eqref{e:HD not thicc :(} we get
		\begin{equation}\label{e:LD has plenty mass}
		\sum_{Q\in\LD(Q_0)}\mu(Q) \ge \fr{\Theta_0^{-s}}{2}\mu(Q_0).
		\end{equation}
		
		On the other hand, recall from Lemma \ref{c:number-cubes} that there are at most $C2^{2N}$ cubes of sidelength $2^{-N}\ell(Q_0)$ contained in $T_{2\Lambda}$, where $C=C(\Lambda,n)$. Moreover, for any $Q\in \LD(Q_0)$ we have
		\begin{equation*}
		\mu(Q)\le 2 \Theta_0 \ell(Q)^{D-1}= 2\, \mu(Q_0)\fr{\ell(Q)^{D-1}}{\ell(Q_0)^{D-1}} =2^{-N(D-1)+1}\mu(Q_0).
		\end{equation*}
		In consequence,
		\begin{equation*}
		\sum_{Q\in\LD(Q_0)}\mu(Q) \le C 2^{2N} 2^{-N(D-1)+1}\mu(Q_0).
		\end{equation*}
		This contradicts \eqref{e:LD has plenty mass} because
		\begin{equation*}
		C\, 2^{-ND+3N+1} = 2\, C\,  (2^{-A^2N})^{(-D+3)A^{-2}} \overset{\eqref{e:choice of N}}{\le}\widetilde{C}(n)\Theta_0^{(-D+3)A^{-2}}\le \fr{\Theta_0^{-s}}{2},
		\end{equation*}
		choosing $s=s(A,n)$ small enough.
	\end{proof}
	We move on to the second part of Lemma \ref{l:main lemma}, i.e. the packing estimate \eqref{e:HD packing}.
	\begin{lemma}\label{l:HD-in-tube}
	We have
	\begin{equation}\label{e:HD-in-tube}
	    \bigcup_{Q\in\HD(Q_0)}Q\subset T_{2\Lambda}.
	\end{equation}
	In consequence, 
	\begin{equation}\label{e:HD packing 2}
		\sum_{Q\in\HD(Q_0)}\ell(Q)^2\lesssim \ell(Q_0)^2.
		\end{equation}
	\end{lemma}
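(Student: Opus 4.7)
The packing estimate \eqref{e:HD packing 2} will be an immediate consequence of the inclusion \eqref{e:HD-in-tube}: since every $Q\in\HD(Q_0)$ has sidelength $2^{-N}\ell(Q_0)$, Lemma \ref{c:number-cubes} would give $\#\HD(Q_0)\lesssim 2^{2N}$, and then $\sum_{Q\in\HD(Q_0)}\ell(Q)^2\lesssim 2^{2N}\cdot 2^{-2N}\ell(Q_0)^2=\ell(Q_0)^2$. So the real content is to establish \eqref{e:HD-in-tube}.

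My plan is to fix $Q\in\HD(Q_0)$ and decompose it into dyadic subcubes $Q'$ of sidelength $\ell(Q')=2^{-AN}\ell(Q_0)$ -- the scale at which Lemma \ref{c:dense-tube} operates. I would then aim to establish the following dichotomy for each such $Q'$: either $Q'\cap T=\varnothing$ (equivalently $Q'\subset Q_0\setminus T$, in which case Lemma \ref{c:dense-tube} bounds $\mu(Q')\le\mu(Q_0)/(\Theta_0\,2^{AND})$), or $Q'\subset T_2$. The dichotomy should follow from the Kor\'anyi inequality $d(p,q)\ge|z(p)-z(q)|$, which is immediate from $\|(z,t)\|_{\bH}\ge|z|$ in \eqref{e:korany}: if some $q'\in Q'\cap T$, then for every $p\in Q'$ one has $|z(p)|\le|z(q')|+d(p,q')\le 2^{-N}\ell(Q_0)+2\Lambda\cdot 2^{-AN}\ell(Q_0)$, and the second term is negligible once $N\ge N_0$ is large enough that $2\Lambda\cdot 2^{-(A-1)N}\le 1$, leaving $|z(p)|\le 2\cdot 2^{-N}\ell(Q_0)$.

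Summing over the $\approx 2^{(A-1)ND}$ subcubes using \eqref{e:dyadic property} together with the dichotomy should yield a bound of the form
\begin{align*}
\mu(Q)\le\mu(Q\cap T_2)+\sum_{Q'\not\subset T_2}\mu(Q')\lesssim\mu(Q\cap T_2)+2^{(A-1)ND}\cdot\frac{\mu(Q_0)}{\Theta_0\,2^{AND}}=\mu(Q\cap T_2)+\frac{C\,\ell(Q)^{D-1}}{2^N},
\end{align*}
and since $Q\in\HD(Q_0)$ forces $\mu(Q)>2\Theta_0\ell(Q)^{D-1}$ with $\Theta_0\ge M\gg 2^{-N}$, I expect this to push $\mu(Q\cap T_2)>0$. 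Once I have some $q_0\in Q\cap T_2$, a further application of the triangle inequality together with $d(p,q_0)\le 2\Lambda\ell(Q)=2\Lambda\cdot 2^{-N}\ell(Q_0)$ should give $|z(p)|\le(2+2\Lambda)\cdot 2^{-N}\ell(Q_0)$ for every $p\in Q$, placing $Q$ inside $T_{2\Lambda}$ after absorbing the additive constant into the absolute parameter $\Lambda$ (which the paper does not track precisely).

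The main obstacle I foresee is simply constant bookkeeping: I need to choose $A$ large enough (in terms of $n$) and $N_0$ large enough that the subcube dichotomy is clean (so that the $2^{-(A-1)N}$ geometric error falls under a fixed threshold) and the $2^{-N}$ remainder in the mass estimate is dominated by $\Theta_0$. Both conditions should be compatible with the values of $A$, $M$, and $N_0$ already fixed in Lemma \ref{l:main lemma}, so I do not anticipate any deep difficulty.
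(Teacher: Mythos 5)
Your proof is correct and follows essentially the same route as the paper: both decompose $Q$ into subcubes at scale $2^{-AN}\ell(Q_0)$, invoke Lemma \ref{c:dense-tube} to control the mass of subcubes missing $T$, deduce that $Q$ must meet $T_2$, and then use the bound $\diam(Q)\lesssim\Lambda\ell(Q)=\Lambda\,2^{-N}\ell(Q_0)$ together with Lemma \ref{c:number-cubes} to conclude. The only cosmetic difference is that you argue directly via a dichotomy for the subcubes and conclude $\mu(Q\cap T_2)>0$, whereas the paper assumes $Q\cap T_2=\varnothing$ for contradiction, which makes every subcube miss $T$ at once and yields $\Theta(Q)\lesssim 2^{-N}\le 1$, incompatible with $Q\in\HD(Q_0)$.
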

	\begin{proof}
	We will prove that for $Q\in\HD(Q_0)$ we have $Q\cap T_2\neq\varnothing.$ Then, since $\ell(Q)=2^{-N}\ell(Q_0)$, it follows easily from \eqref{e:balls in cubes} that indeed $Q\subset T_{\Lambda+2}(Q_0)\subset T_{2\Lambda}(Q_0)$.
	
	We argue by contradiction. Suppose that $Q\in \HD(Q_0)$ and $Q\cap T_2=\varnothing$. Consider the cubes $\{P_i\}_{i\in I}$ with $\ell(P_i)=2^{-AN}\ell(Q_0) = 2^{-(A-1)N}\ell(Q)$ and $P_i\subset Q$. Then, $Q=\bigcup_i P_i,$ for all $i\in I$ we have $P_i\cap T_2=\varnothing,$ and $\# I\approx 2^{(A-1)ND}$ by \eqref{e:dyadic property}. 
	
	We use Lemma \ref{c:dense-tube} to conclude that for all $i\in I$
	\begin{equation*}
	    \mu(P_i) \le \frac{\mu(Q_0)}{\Theta_0 \, 2^{AND}}.
	\end{equation*}
	Summing over $i\in I$ yields
	\begin{equation*}
	    \mu(Q) = \sum_{i\in I}\mu(P_i)\le \# I\cdot \frac{\mu(Q_0)}{\Theta_0 \, 2^{AND}} \approx 2^{(A-1)ND} \frac{\mu(Q_0)}{\Theta_0 \, 2^{AND}} = \frac{\mu(Q_0)}{\Theta_0 \, 2^{ND}},
	\end{equation*}
	so that
	\begin{equation*}
	    \Theta(Q) = \fr{\mu(Q)}{(2^{-N}\ell(Q_0))^{D-1}} \lesssim \frac{\mu(Q_0)}{\Theta_0 \, 2^{ND}}\cdot \frac{1}{2^{-N(D-1)}\ell(Q_0)^{D-1}} = \frac{\Theta_0}{\Theta_0 \, 2^{N}}=2^{-N}\le 1.
	\end{equation*}
	But this contradicts the assumption $Q\in \HD(Q_0)$:
	\begin{equation*}
	    \Theta(Q)\ge 2\Theta_0\ge 2M>1,
	\end{equation*}
	and so the proof of \eqref{e:HD-in-tube} is finished.
	
	Concerning \eqref{e:HD packing 2}, note that by \eqref{e:HD-in-tube} and Lemma \ref{c:number-cubes} we have
	\begin{equation} \label{e:HD-card}
	    \# \HD(Q_0) \lesssim 2^{2N}.
	\end{equation}
	Hence,
	\begin{align*}
	    \sum_{Q \in \HD(Q_0)} \ell(Q)^2 = \ell(Q_0)^2\,  2^{-2N}  \sum_{Q \in \HD(Q_0)} 1  \lesssim \ell(Q_0)^2. 
	\end{align*}
	\end{proof}
\section{Iteration argument}\label{s:iteration}
To complete the proof of Theorem \ref{t:main}, we assume that the measure $\mu$ does not satisfy the polynomial growth condition \eqref{e:main}. Then we will use Lemma \ref{l:main lemma} countably many times to construct a set $Z$ with positive $\mu$-measure and with Hausdorff dimension at most $2$. 

Suppose that there exists a ball $B(x,r)$ with $\mu(B(x,r)) \geq C_2 r^{2n+1}$; if $C_2$ is big enough, we can find a cube $Q_0 \in \dD,\ Q\subset B(x,r)$ such that 
	\begin{align*}
	\Theta(Q_0) \ge M, 
	\end{align*}
	where $M$ is the constant from Lemma \ref{l:main lemma}.

Let $A>1$ be as in Lemma \ref{l:main lemma}. Following the notation of Lemma \ref{l:main lemma}, for an arbitrary cube $Q\in\dD$ with $\Theta(Q)\ge M$, set
\begin{equation*}
    N(Q) := \floor{A^{-2} \log(\Theta(Q))}
\end{equation*}
and
\begin{equation*}
    \HD(Q):=\left\{ P \in \dD \, |\, P\subset Q,\, \ell(P) =2^{-N(Q)}\ell(Q), \, \Theta(P) > 2 \Theta(Q) \right\}.
\end{equation*}
Put $Z_{0}:=Q_0,\ \HD_0:=\{Q_0\},\ \HD_1:=\HD(Q_0),$ and $Z_1:=\bigcup_{Q\in\HD_1} Q$. Proceeding inductively, for all $j\ge 2$ we define
\begin{gather*}
    \HD_{j} := \bigcup_{Q \in \HD_{j-1}} \HD(Q),\\
    Z_{j} := \bigcup_{Q\in\HD_j} Q.
\end{gather*}
Note that for each $j$ the cubes in $\HD_j$ form a disjoint family. Moreover, $\{Z_j\}_{j\ge 0}$ form a decreasing sequence of sets, that is $Z_{j+1}\subset Z_j.$ Define
\begin{equation*}
   Z := \bigcap_{j \geq 0} Z_j.
\end{equation*}
\begin{claim}\label{c:positive}
We have
\begin{equation*}
    \mu(Z)\gtrsim_{M,s}\mu(Q_0).
\end{equation*}
\end{claim}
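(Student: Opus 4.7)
The plan is to iterate the mass estimate \eqref{e:HD are thicc} from Lemma \ref{l:main lemma}, exploiting the fact that densities grow geometrically along the tree of cubes $\HD_j$, so the multiplicative losses at each generation form a convergent product.

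First I would establish the density growth: by definition, every $Q \in \HD_{j}$ is contained in some parent $P \in \HD_{j-1}$ with $\Theta(Q) > 2 \Theta(P)$. Iterating this inequality starting from $\Theta(Q_0) \ge M$, I obtain for every $Q \in \HD_j$ the bound
\begin{equation*}
\Theta(Q) \ge 2^{j} M.
\end{equation*}
In particular, $\Theta(Q) \ge M$, so the hypotheses of Lemma \ref{l:main lemma} apply to every cube in every generation, which is what makes the iteration legal.

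Next I would apply \eqref{e:HD are thicc} to each $Q \in \HD_{j-1}$ and sum. Since $\Theta(Q) \ge 2^{j-1} M$, the lemma yields
\begin{equation*}
\mu(Z_j \cap Q) = \sum_{P \in \HD(Q)} \mu(P) \ge \bigl(1 - \Theta(Q)^{-s}\bigr)\mu(Q) \ge \bigl(1 - (2^{j-1}M)^{-s}\bigr)\mu(Q).
\end{equation*}
Because the cubes in $\HD_{j-1}$ are pairwise disjoint and their union is $Z_{j-1}$, summing over $Q \in \HD_{j-1}$ gives
\begin{equation*}
\mu(Z_j) \ge \bigl(1 - (2^{j-1}M)^{-s}\bigr)\mu(Z_{j-1}).
\end{equation*}
Iterating from $j=1$ yields
\begin{equation*}
\mu(Z_j) \ge \mu(Q_0) \prod_{i=0}^{j-1} \bigl(1 - (2^{i}M)^{-s}\bigr).
\end{equation*}
Since $s>0$, the series $\sum_{i \ge 0} (2^i M)^{-s} = M^{-s}(1-2^{-s})^{-1}$ converges, so the infinite product $\prod_{i \ge 0}(1 - (2^i M)^{-s})$ converges to a strictly positive constant $c(M,s)$ (assuming $M$ large enough so that each factor lies in $(0,1)$, which is already part of the hypothesis $M > 100$ plus $s < 1/2$).

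Finally I would pass to the limit. The sets $Z_j$ form a decreasing sequence of Borel sets with $Z_0 = Q_0$, and $\mu(Q_0) < \infty$ because $\mu$ is Radon and $Q_0$ is bounded (being contained in a ball). Hence continuity from above for $\mu$ gives
\begin{equation*}
\mu(Z) = \lim_{j\to\infty} \mu(Z_j) \ge c(M,s)\, \mu(Q_0),
\end{equation*}
which is exactly the claim. The only subtlety is the measurability of $Z_j$: each $Z_j$ is a finite or countable union of dyadic cubes, which are Borel by the construction in Lemma \ref{l:cubes}, so $Z$ is Borel. There is no real obstacle here; the whole argument is purely bookkeeping once one notices the crucial geometric growth $\Theta(Q) \ge 2^j M$, which turns the potentially harmful loss factor $(1-\Theta^{-s})$ into a convergent product.
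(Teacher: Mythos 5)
Your proof is correct and takes essentially the same approach as the paper: you establish the density growth $\Theta(Q)\ge 2^jM$ for $Q\in\HD_j$, apply the mass estimate \eqref{e:HD are thicc} generation by generation to get a convergent infinite product of factors $(1-(2^iM)^{-s})$, and conclude by continuity of measure from above. The only difference is cosmetic (a shift by one index and the explicit remark on measurability/finiteness of $\mu(Q_0)$, both of which the paper leaves implicit).
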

\begin{proof}
Observe that for $Q\in\HD_j$ we have
\begin{equation}\label{e:estimate densities}
    \Theta(Q)\ge 2^{j}\Theta(Q_0)\ge 2^{j}M.
\end{equation}
In particular, $\Theta(Q)\ge M$ and so we may apply Lemma \ref{l:main lemma} to $Q$. It follows that for any $j \geq 0$ we have
\begin{multline*}
    \mu(Z_{j+1})=\sum_{Q \in \HD_{j+1}} \mu(Q) = \sum_{Q \in \HD_{j}} \sum_{P \in \HD(Q)} \mu(P) \stackrel{\eqref{e:HD are thicc}}{\geq} \sum_{Q \in \HD_{j}} (1-\Theta(Q)^{-s})\mu(Q)\\
    \overset{\eqref{e:estimate densities}}{\ge} \sum_{Q \in \HD_{j}} (1-2^{-js}M^{-s})\mu(Q) = (1-2^{-js}M^{-s}) \mu(Z_{j}).
\end{multline*}
Using this estimate $(j+1)$ times we arrive at
\begin{equation}\label{e:HD_jbig}
    \mu(Z_{j+1}) \geq \prod_{i=0}^{j}(1-2^{-is}M^{-s}) \mu(Q_0).
\end{equation}
Since $Z_j$ form a sequence of decreasing sets, we get by the continuity of measure 
\begin{equation*}
    \mu(Z) = \lim_{j\to\infty} \mu(Z_j)\ge \prod_{i=0}^{\infty}(1-2^{-is}M^{-s}) \mu(Q_0) = C(s,M)\mu(Q_0),
\end{equation*}
where $C(s,M)$ is positive and finite because $\sum_{i=0}^{\infty} 2^{-is}<\infty$.
\end{proof}
\begin{claim}\label{c:dim} We have
\begin{equation*}
    \textstyle{\dim_H(Z)} \leq 2.
\end{equation*}
\end{claim}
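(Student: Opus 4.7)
The plan is to show that $\hm{2+\delta}(Z)=0$ for every $\delta>0$, by using the covers $Z\subset Z_j=\bigcup_{Q\in\HD_j}Q$ and controlling the $(2+\delta)$-sums through the packing estimate \eqref{e:HD packing} and the doubling-densities estimate \eqref{e:estimate densities}.

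Fix $\delta>0$ and set $S_j:=\sum_{Q\in\HD_j}\ell(Q)^{2+\delta}$. For any $Q\in\HD_j$ the children $P\in\HD(Q)$ have sidelength $\ell(P)=2^{-N(Q)}\ell(Q)$, so the packing estimate \eqref{e:HD packing} applied to $Q$ gives
\begin{equation*}
\sum_{P\in\HD(Q)}\ell(P)^{2+\delta}
= (2^{-N(Q)}\ell(Q))^{\delta}\sum_{P\in\HD(Q)}\ell(P)^2
\leq C_p\,2^{-N(Q)\delta}\,\ell(Q)^{2+\delta}.
\end{equation*}
From \eqref{e:estimate densities} we have $\Theta(Q)\geq 2^{j}M$ for $Q\in\HD_j$, hence, by the choice of $N(Q)=\lfloor A^{-2}\log\Theta(Q)\rfloor$, there is $c=c(A)>0$ with $N(Q)\ge c\,j$ for all $j\ge 1$ (for $M$ large). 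Summing the previous display over $Q\in\HD_j$ yields the recursion
\begin{equation*}
S_{j+1}\leq C_p\,2^{-c\delta j}\,S_j,
\end{equation*}
and iterating,
\begin{equation*}
S_j\leq \Bigl(\prod_{i=0}^{j-1}C_p\,2^{-c\delta i}\Bigr)S_0
= C_p^{\,j}\,2^{-c\delta\,j(j-1)/2}\,\ell(Q_0)^{2+\delta}.
\end{equation*}
The quadratic decay in the exponent of $2$ beats the geometric growth $C_p^{\,j}$, so $S_j\to 0$ as $j\to\infty$.

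Next I would observe that $\max_{Q\in\HD_j}\ell(Q)\to 0$. Indeed, every ancestor $Q_i\in\HD_i$ of a cube $Q\in\HD_j$ satisfies $N(Q_i)\geq N_0$ (where $N_0$ is the large constant from the discussion after Lemma \ref{l:main lemma}), so $\ell(Q)\leq 2^{-jN_0}\ell(Q_0)$. Consequently, for every $j$ the family $\{B(p_Q,\Lambda\ell(Q))\}_{Q\in\HD_j}$ is a cover of $Z\subset Z_j$ by sets of diameter at most $2\Lambda\cdot 2^{-jN_0}\ell(Q_0)$, which tends to $0$, so it is admissible for the definition of $\hm{2+\delta}$. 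Hence
\begin{equation*}
\hm{2+\delta}(Z)\leq \liminf_{j\to\infty}\sum_{Q\in\HD_j}(2\Lambda\ell(Q))^{2+\delta}\lesssim \liminf_{j\to\infty}S_j=0.
\end{equation*}
Since $\delta>0$ was arbitrary, $\dim_H(Z)\leq 2$.

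The only delicate point is making sure that the $2^{-N(Q)\delta}$ gain in the packing sum really grows with the generation index $j$; this is precisely what the doubling lower bound $\Theta(Q)\geq 2^jM$ on $\HD_j$ delivers, and it is what makes the iteration produce the super-geometric decay $2^{-c\delta j^2}$ needed to kill the $C_p^{\,j}$ term. Everything else is bookkeeping with the dyadic cubes and the definition of Hausdorff measure.
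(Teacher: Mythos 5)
Your proof is correct and follows essentially the same route as the paper: both rely on the packing estimate \eqref{e:HD packing} to control $\sum\ell(Q)^2$, and on the doubling lower bound \eqref{e:estimate densities} to get $N(Q)\gtrsim j$, so that the super-geometric (quadratic-in-$j$) gain in sidelength defeats the geometric growth $C_p^j$. The only difference is cosmetic bookkeeping — you run a direct recursion on $S_j=\sum_{Q\in\HD_j}\ell(Q)^{2+\delta}$, whereas the paper separately bounds $\sum\ell(Q)^2\le C_p^j\ell(Q_0)^2$ and $\max\ell(Q)\lesssim 2^{-cj(j-1)}\ell(Q_0)$ and then multiplies.
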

\begin{proof}
Recall that $N(Q) = \floor{A^{-2} \log(\Theta(Q))}$. It follows from \eqref{e:estimate densities} that for $Q\in\HD_j$ we have $N(Q)\ge C_3jA^{-2}$ for some absolute constant $C_3>0$. Thus, for $Q\in\HD_j$ and $P\in\HD(Q)$
\begin{equation*}
    \ell(P)=2^{-N(Q)}\ell(Q)\le 2^{-C_3jA^{-2}}\ell(Q).
\end{equation*}
Using this observation $j$ times we get that for $P\in\HD_{j+1}$
\begin{equation*}
    \ell(P)\le 2^{-C_4 j(j+1)A^{-2}}\ell(Q_0),
\end{equation*}
where $C_4=C_3/2.$
Hence, the cubes from $\HD_j$ form coverings of $Z$ with decreasing diameters, well suited for estimating the Hausdorff measure of $Z$. 

Let $0<\varepsilon<1,\ 0<\delta<1$ be small. Let $j\ge 0$ be so big that for $Q\in\HD_{j}$ we have $\diam(Q)\le\Lambda\ell(Q)\le\delta$. Then,
\begin{equation}\label{e:Hausdorff estimate}
   \mathcal{H}_{\delta}^{2+\ve}(Z) \le \Lambda^{2+\varepsilon}\sum_{Q\in\HD_j}\ell(Q)^{2+\varepsilon}\le \Lambda^{2+\varepsilon}(2^{-C_4j(j-1)A^{-2}}\ell(Q_0))^{\varepsilon}\sum_{Q\in\HD_j}\ell(Q)^{2}.
\end{equation}
It follows by \eqref{e:HD packing} that
\begin{equation*}
    \sum_{Q\in\HD_j}\ell(Q)^{2} = \sum_{P\in\HD_{j-1}}\sum_{Q\in\HD(P)}\ell(Q)^{2}\le C_p\sum_{P\in\HD_{j-1}}\ell(P)^{2}.
\end{equation*}
Using the estimate above $j$ times, and putting it together with \eqref{e:Hausdorff estimate} we arrive at
\begin{equation*}
    \mathcal{H}_{\delta}^{2+\ve}(Z) \le \Lambda^{2+\varepsilon} (C_p)^j\, \,2^{-\varepsilon C_4 j(j-1)A^{-2}}\,  \ell(Q_0)^{2+\varepsilon}.
\end{equation*}
The right hand side above converges to 0 as $j\to\infty$ (just note that the exponent at $C_p$ is linear in $j$ while the exponent at $2$ is quadratic in $j$). Hence, $\mathcal{H}_{\delta}^{2+\ve}(Z)=0.$ Letting $\delta\to 0$ we get $\mathcal{H}^{2+\ve}(Z)=0.$ Since this is true for arbitrarily small $\ve>0$, it follows that 
\begin{equation*}
    \textstyle{\dim_H(Z)}  = \inf\{t\ge 0\ :\ \mathcal{H}^t(Z)=0\} \le 2.
\end{equation*}
\end{proof}
\begin{proof}[Proof of Theorem \ref{t:main}]
We have found a set $Z \subset \mathbb{H}^{n}$ of dimension smaller than or equal to $2$ (Claim \ref{c:dim}) but which nevertheless has positive $\mu$-measure (Claim \ref{c:positive}). This contradicts the assumptions of Theorem \ref{t:main}. Thus, there exists $C_2=C_2(n,C_1)$ such that $\mu(B(x,r)) \leq C_2 r^{2n+1}$ for all $x\in\bH^n$ and $r>0$.
\end{proof}

\subsection*{Funding}
D. D\k{a}browski was supported by Spanish Ministry of Economy and Competitiveness, through the Mar\'ia de Maeztu Programme for Units of Excellence in R\&D (grant MDM-2014-0445), and also partially supported by the Catalan Agency for Management of University and Research Grants (grant 2017-SGR-0395), and by the Spanish Ministry of Science, Innovation and Universities (grant MTM-2016-77635-P).

M. Villa was supported by The Maxwell Institute Graduate School in Analysis and its
Applications, a Centre for Doctoral Training funded by the UK Engineering and Physical
Sciences Research Council (grant EP/L016508/01), the Scottish Funding Council, Heriot-Watt
University and the University of Edinburgh.

Both authors were partially supported by the grant 346300 for IMPAN from the Simons Foundation and the matching 2015-2019 Polish MNiSW fund.

\bibliography{bibliography}
\bibliographystyle{halpha-abbrv}
\end{document}